\documentclass{amsart}
\usepackage{amsmath}
\usepackage{pdfpages}
\usepackage{makecell}
\usepackage{amsxtra}
\usepackage{indentfirst}
\usepackage{algorithm}
\usepackage{algpseudocode}
\usepackage{ytableau}
\usepackage[english]{babel}
\usepackage{tikz-cd}
\usepackage{amssymb,amscd}
\usepackage[sort]{natbib}
\usepackage{amsthm,graphicx}
\usepackage{mathtools}
\usepackage{breqn}
\usepackage{enumerate}
\usepackage[shortlabels]{enumitem}
\usepackage{amsthm}
\makeatletter
\renewenvironment{proof}[1][\proofname] {\par\pushQED{\qed}\normalfont\topsep6\p@\@plus6\p@\relax\trivlist\item[\hskip\labelsep\bfseries#1\@addpunct{.}]\ignorespaces}{\popQED\endtrivlist\@endpefalse}
\makeatother
\usepackage{appendix}
\usepackage{tikz}
\usepackage{mathrsfs}
\usepackage{hyperref}
\usepackage{pgfplots}
\pgfplotsset{compat=1.15}
\usepackage{mathrsfs}
\usetikzlibrary{arrows}
\usepackage[english]{babel}
\usepackage{geometry}
\usepackage{thmtools}
\usepackage{cleveref}
\usepackage{thm-restate}
\newtheorem{theorem}{Theorem}[section]
\newcommand{\thm}{\begin{theorem}}
\newcommand{\et}{\end{theorem}}
\newtheorem{lemma}[theorem]{Lemma}

\newtheorem{question}[theorem]{Question}
\newtheorem{definition}[theorem]{Definition}
\newtheorem{proposition}[theorem]{Proposition}
\newtheorem{conjecture}[theorem]{Conjecture}
\newtheorem{consequence}[theorem]{Corollary}
\newtheorem{eg}{Example}[theorem]
\newtheorem{note}[theorem]{Remark}
\newcommand{\bl}{\begin{lemma}}
\newcommand{\el}{\end{lemma}}
\newcommand{\defi}{\begin{definition}}
\newcommand{\ed}{\end{definition}}
\newcommand{\cor}{\begin{consequence}}
\newcommand{\ec}{\end{consequence}}
\newcommand{\egs}{\begin{eg}}
\newcommand{\ee}{\end{eg}}
\newcommand{\qs}{\begin{question}}
\newcommand{\eqs}{\end{question}}
\begin{document}
\crefname{theorem}{Theorem}{Theorem}
\crefname{chapter}{Chapter}{Chapter}
\crefname{problem}{Problem}{Problem}
\crefname{lemma}{Lemma}{Lemma}
\crefname{consequence}{Corollary}{Corollary}
\crefname{definition}{Definition}{Definition}
\crefname{case}{Case}{Case}
\crefname{proposition}{Proposition}{Proposition}
\crefname{property}{Property}{Property}
\crefname{conjecture}{Conjecture}{Conjecture}
\crefname{section}{Section}{}
\crefname{question}{Question}{Question}
\crefname{note}{Remark}{Remark}
\crefname{algorithm}{Algorithm}{Algorithm}
\crefname{figure}{Figure}{Figures}
\title{Surface subgroups of Baumslag doubles along short words}
\author[H. Le]{Le Xuan Hoang}
\address{VNU University of Science}
\email{lexuanhoang_t66@hus.edu.vn}
\author[H. Tran]{Tran Nguyen Nam Hung}
\address{Ho Chi Minh City University of Science}
\email{nguyennamtranhung1303@gmail.com}
\date{\today}
\begin{abstract}
    If $U$ is a minimal, diskbusting, finite list of words in a free group $F_n$ of rank $n$ such that the sum of the lengths of words in $U$ is at most $2n+4$, we prove that the natural presentation complex of the Baumslag double of $F_n$ along $U$ virtually contains a $\pi_1$-injective embedded closed hyperbolic surface. This verifies the Tiling Conjecture of Kim and Wilton for this type of lists of words, and in particular, implies that the corresponding Baumslag double contains a hyperbolic surface subgroup.  
\end{abstract}
\maketitle
\setcounter{section}{0}
\section{Introduction}
A \textit{hyperbolic surface group} is the fundamental group of a closed, orientable $2$-manifold with negative Euler characteristic. Denote by $F_n$ the free group of rank $n$ with a fixed basis $\mathcal{A}_n = \{a_1, \ldots, a_n\}$. Let $U$ be a list of words in $F_n$, a \textit{double of a free group} is the fundamental group of a graph of spaces $X(U)$ having two vertex spaces homeomorphic to $\bigvee\limits_{i=1}^{n}S^{1}$; furthermore, a cylindrical edge space is glued along the two copies of each word in $U$. Denote by $D(U):= \pi_1(X(U))$ the double of $F_n$ with respect to the list $U$. We also call $D(U)$ as the \emph{Baumslag double} of $F_n$ along $U$.\smallbreak
Our initial purpose is to study the existence of a $\pi_1$-injective immersion of a hyperbolic surface in $X(U)$.
\begin{question}
    When does $X(U)$ virtually contain a $\pi_1$-injective closed hyperbolic surface?
\end{question}
The above question is motivated by the following question by M. Gromov.  
\begin{question}
    [\cite{gromov1992asymptotic}]\label{gromov_q} Does every one-ended word-hyperbolic group have a hyperbolic surface subgroup?
\end{question}
\cref{gromov_q} has been answered affirmatively for the following cases.
\begin{enumerate}
    \item Coxeter groups \cite{GORDON2004135}.
    \item Fundamental groups of closed hyperbolic $3$-manifolds \cite{e6b728f3-659c-39d6-90a1-237df1230f55}.
    \item Graphs of free groups with infinite cyclic edge groups \cite{calegari_2023_rcpw2-ejn35, wilton2017}.
\end{enumerate}
An interesting case is when the group is given as a Baumslag double of a free group. In particular, in \cite{kim2010geometricity, Kim_2010_wilton}, the authors formulated a combinatorial group-theoretic condition on the list of words $U$, which is equivalent to the condition that the natural presentation complex for $D(U)$ virtually contains a $\pi_1$-injective, embedded, closed hyperbolic surface. They conjectured that this condition always holds when $D(U)$ is one-ended, word-hyperbolic, and $U$ is minimal among its automorphic copies in $F_n$. This conjecture is called the Tiling Conjecture. Kim and Oum \cite{kim2010} verified the Tiling Conjecture for $D(U)$ if (1) the free group has rank two, or (2) every generator is used the same number of times in a minimal automorphic image of the amalgamating words.\smallbreak
Let $U$ be a finite list of words in $F_n$, by the word “list” we mean that repetitions are allowed. We say that $U$ is \emph{minimal} if no automorphism of $F_n$ reduces the sum of the lengths of the words in $U$. Our main result shows that Tiling Conjecture is true if the sum of the length of words in $U$ is at most $2n+4$.
\thm\label{main_thm}
Let $U$ be a minimal list of words in $F_n$. If $D(U)$ is one-ended, word-hyperbolic, and the sum of the lengths of the words in $U$ is at most $2n+4$, then the natural presentation complex for $D(U)$ virtually contains a $\pi_1$-injective, embedded closed hyperbolic surface.
\et
In \cite{wilton2017}, Wilton proved that there exists a hyperbolic surface subgroup inside the fundamental group of each one-ended word-hyperbolic graph of free groups with cyclic edge groups. From \cref{main_thm}, we obtain an alternative proof of his result in the special case when the group in consideration is $D(U)$.
\cor\label{like_wilton}
If the double $D(U)$ of a rank-$n$ free group is one-ended and word-hyperbolic, and if the sum of the lengths of the words in $U$ is at most $2n+4$, then $D(U)$ contains a hyperbolic surface subgroup.
\ec
In order to enhance readability, we briefly explain the strategy of proving \cref{main_thm}. First of all, we may always assume that the list $U$ is minimal, moreover, the condition that $D(U)$ is one-ended is equivalent to the condition that the list $U$ is \textit{diskbusting} \cite{Gordon_2010}. These combinatorial conditions on the list of words allow one to formulate a graph-theoretic conjecture on the existence of hyperbolic surfaces in the natural presentation complex for $D(U)$, the Tiling Conjecture \cite{kim2010, kim2010geometricity, Kim_2010_wilton}, which we recall in \cref{prelims}. In \cref{main_proof_section}, we prove a special case of this conjecture by an inductive argument involving subdivisions of graphs, and then deduce \cref{main_thm}. The base step of the induction is proven using results in the case of doubles of free groups of rank two and the case of regular lists by Kim and Oum \cite{kim2010}. In \cref{discussion}, we give computational evidences for a stronger conjecture than the Tiling Conjecture. 
\subsection{Acknowledgement}
We would like to thank Professor Sang-hyun Kim for providing guidance and support throughout the project. We also would like to thank the organizers of Vietnam-Polymath-REU for creating an opportunity for us to enhance our skills and knowledge by collaboratively working on research problems.
\subsection{AI disclosure statement}
We do not use AI in any part of the project. 
\section{Preliminary}\label{prelims}
We explain the connection between Tiling Conjecture \cite{kim2010, kim2010geometricity, Kim_2010_wilton} and Gromov's question on hyperbolic surface subgroups of one-ended word-hyperbolic groups. Much of the material in this section can be found in \cite{kim2010}.\smallbreak
\subsection{Baumslag doubles of free groups, Whitehead graphs, and polygonality}
\emph{Baumslag doubles} of free groups are defined as follows. Given a natural number $n\ge 2$, let $F_n$ be the free group on a set $\mathcal{A}_n$ having $n$ elements. Let $U = [u_1, \ldots, u_r]$ be a list of nontrivial words in $F_n$. Suppose that $F^{(1)}_n$ and $F^{(2)}_n$ are two copies of $F_n$. Denote by $X(U)$ the graph of spaces with two vertex spaces, which are $\bigvee\limits_{i=1}^{n}S^{1}_{(1)} = \operatorname{Cay}(F^{(1)}_n)/F^{(1)}_n$ and $\bigvee\limits_{i=1}^{n}S^{1}_{(2)} = \operatorname{Cay}(F^{(2)}_n)/F^{(2)}_n$, here $\operatorname{Cay}(F_n)$ denotes the Cayley graph of a rank-$n$ free group with its standard generating set, on which the group $F_n$ acts naturally; $X(U)$ has $r$ edge spaces $e_1, \ldots, e_r$ connecting the two vertices. For $i\in \{1, \ldots, r\}$, the edge space $e_i$ is a cylinder $C_i$, two ends of $C_i$ are glued to the loops corresponding to $u_i$ in $\bigvee\limits_{i=1}^{n}S^{1}_{(1)}$ and $\bigvee\limits_{i=1}^{n}S^{1}_{(2)}$. The fundamental group of $X(U)$ is called a \textit{(Baumslag) double of $F_n$ along $U$}, and is often denoted by $D(U)$; and $X(U)$ is called the \emph{natural presentation complex for $D(U)$}. Notice that if $\varphi: F_n\to F_n$ is an automorphism and $U'=\varphi(U)$, then $D(U)\cong D(U')$; and if one replaces some words in $U$ by their conjugates to get a new list $U'$, one still has $D(U)\cong D(U')$. Therefore, it is possible to assume that the list $U$ is \textit{minimal}, consisting of \textit{cyclically reduced words}.\smallbreak
  Given a word $w = x_1\cdots x_m\in F_n$, where $x_1, \ldots, x_m\in \mathcal{A}_n\cup \mathcal{A}^{-1}_n$, the \textit{length-two cyclic subwords} of $w$ are the words in the list $[x_1 x_2, x_2 x_3, \ldots, x_{m-1} x_m, x_m x_1]$. Suppose that we have a list $U$ of cyclically reduced words in $F_n$, the \textit{Whitehead graph} $W(U)$ associated to $U$ is constructed as follows
  \begin{itemize}
      \item[(i)] The vertex set of $W(U)$ is $\mathcal{A}_n\cup \mathcal{A}^{-1}_n$.
      \item[(ii)] Each length-two subword $x\cdot y$ of a word in $U$ corresponds to an edge of $W(U)$ joining $x$ and $y^{-1}$.  
  \end{itemize}
  \begin{note}\upshape
      In general, if $n$ is a natural number, a \textit{Whitehead graph on $2n$ vertices} is the Whitehead graph $W(U)$ of some list of words $U\subseteq F_n$.
  \end{note}
  We give an additional structure on $W(U)$, the \emph{edge-pairing}. Given a vertex $a$ of $W(U)$, denote the set of all edges adjacent to $a$ in $W(U)$ by $\delta_U(a)$, or simply $\delta(a)$ if there is no ambiguity. The \emph{edge-pairing} on $W(U)$ is a collection of maps $(\phi_{a, U})_{a \in \mathcal{A}_n\cup \mathcal{A}^{-1}_n}$, where each map is a bijection from $\delta(a)$ to $\delta(a^{-1})$ defined as follows. Suppose that $w = x_1 \cdots x_m\in U$, denote $x_{m+1} = x_1, x_0 = x_m$, and assume that $x_{i} = a$ for some $i\in \{1, \ldots, m\}$. The length-two subword $x_i x_{i+1}$ corresponds to an edge $e$ in $\delta(a)$, and $\phi_{a, U}(e)$ is defined to be the edge corresponding to the length-two subword $x_{i-1} x_i$ of $w$. If the list $U$ is clear from context, and if $e$ is an edge of $W(U)$, and $e$ is adjacent to $a\in \mathcal{A}_n \cup \mathcal{A}^{-1}_n$, denote $\phi_{a, U}(e) = e^{a^{-1}}$. \smallbreak
  We introduce the notion of \textit{polygonality}, which provides a sufficient condition for a double to contain a hyperbolic surface subgroup, see \cref{polygonality}. Given a list of words in $F_n$ denoted by $U$, let $Z(U)$ be the presentation 2-complex of $F_n/\langle\langle U \rangle \rangle$. By definition, the space $Z(U)$ is a two-dimensional CW-complex with $\operatorname{Cay}(F_n)/F_n = \bigvee\limits_{i=1}^{n}S^1$ as its 1-skeleton; for each word $w = x_1 \cdots x_l$ in $U$, attach the boundary of a 2-disk $D_w$ along the loop reading the word $w$, in other words, the boundary $\partial D_w$ is regarded as an $l$-gon and is glued to $\operatorname{Cay}(F_n)/F_n$ along the loop $x_1 \cdots x_l$. The 0-skeleton of $Z(U)$ consists of a single vertex $v$; the link of this vertex is the Whitehead graph for $U$ by identifying the incoming (outgoing, respectively) portion of a loop $\alpha_i \in \mathcal{A}_n$ with the vertex $\alpha_i$ ($\alpha^{-1}_i$, respectively) in $W(U)$.\smallbreak
  Let $P_1, \ldots, P_m$ be a set of topological 2-disks, each of the disks is equipped with a graph structure on its boundary, such a disk is called a \textit{polygonal disk}. A \textit{side-pairing} on $P_1, \ldots, P_m$ is an equivalence relation on the edges of $\partial P_1, \ldots, \partial P_m$, such that each equivalence class consists of two edges, along with a homeomorphism between the two sides of each equivalence class. Given a side-pairing $\sim$ on $P_1, \ldots, P_m$, we get a closed surface $S = \bigsqcup\limits_{i=1}^{m}P_i / \sim$ with a natural two-dimensional CW-complex structure.  
  \defi[\cite{kim2010geometricity, Kim_2010_wilton}]\upshape
  Let $F_n$ be the free group of rank $n$ with a given free basis $\mathcal{A}_n$. A list $U$ of cyclically reduced words in $F_n$ is called \textit{polygonal} with respect to $\mathcal{A}_n$ if
  \begin{enumerate}
      \item There exists a side-pairing $\sim$ on some polygonal disks $P_1, \ldots, P_m$, which gives a CW-complex $S = \bigsqcup\limits_{i=1}^{m}P_i / \sim$. Denote by $S^{(1)}$ the 1-skeleton of $S$.
      \item There exists a locally injective graph homomorphism $S^{(1)}\to \operatorname{Cay}(F_n)/F_n$, such that \begin{enumerate}
          \item For each $i$, the composition $\partial P_i \hookrightarrow S^{(1)} \to \operatorname{Cay}(F_n)$ satisfies that the cycle $\partial P_i$ is mapped to a loop corresponding to a nontrivial power of a word in $U$.
          \item The Euler characteristic of $S$ is less than $m$.
      \end{enumerate}
      In such case $S$ is called a \textit{$U$-polygonal surface}.
  \end{enumerate}
  \ed
  The main implication of polygonality is the following.
  \thm[\cite{kim2010geometricity, Kim_2010_wilton}]\label{polygonality}
  If $U$ is a polygonal list of words in $F_n$, then a finite cover of $X(U)$ contains a $\pi_1$-injective, closed hyperbolic surface.
  \et
  \subsection{Tiling Conjecture and its combinatorial formulation}
  A list $U$ of words in $F_n$ is said to be \textit{diskbusting} if there does not exist nontrivial subgroups $A, B$ of $F_n$ such that $F_n\cong A * B$, and each word in $U$ is conjugated into $A$ or $B$. It can be shown that the list $U$ is diskbusting if and only if $D(U)$ is one-ended \cite{Gordon_2010}. Tiling Conjecture \cite{kim2010geometricity, Kim_2010_wilton} states that a minimal and diskbusting list of cyclically reduced words in $F_n$ is polygonal, provided that $n>1$.\smallbreak
  Tiling Conjecture can be reformulated into a graph-theoretic statement, as given in \cite{kim2010}; this reformulation is our main consideration in this article.\smallbreak
  \defi\upshape
  Given a list $U$ of cyclically reduced words in $F_n$, the Whitehead graph $W(U)$ is said to be \textit{pairwise well-connected} if for all $a\in \mathcal{A}_n$, there exists $\operatorname{deg}(a) = \operatorname{deg}(a^{-1})$ edge-disjoint paths from vertex $a$ to vertex $a^{-1}$ in $W(U)$.
  \ed
  \begin{proposition}[\cite{Stallings+1999+317+330, stong, heegaard, whitehead}]\label{minimal_diskbusting_and_pairwise}
  Given a list $U$ of cyclically reduced words in $F_n$, the list $U$ is minimal and diskbusting if and only if the Whitehead graph $W(U)$ is connected and pairwise well-connected.      
  \end{proposition}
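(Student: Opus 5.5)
The statement combines two classical results, and I would assemble it from them. Diskbustingness is handled by Whitehead's cut-vertex lemma and its refinement by Stallings, and minimality by Whitehead's peak reduction in its "cut" formulation. First I would record the standard translation of Whitehead automorphisms into graph language. For a Whitehead automorphism determined by a subset $A\subseteq \mathcal{A}_n\cup\mathcal{A}_n^{-1}$ with $a\in A$, $a^{-1}\notin A$, the change in total length is
\[
\sum_{u\in U}|\varphi(u)|-\sum_{u\in U}|u| \;=\; |\delta(A)|-\deg(a),
\]
where $\delta(A)$ is the set of edges of $W(U)$ with exactly one endpoint in $A$. This is the classical counting lemma (Whitehead, Higgins--Lyndon), and I would cite it rather than rederive it.

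For the forward direction, assume $U$ is minimal and diskbusting.

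Connectedness: if $W(U)$ were disconnected, then either some generator pair $\{a,a^{-1}\}$ is separated from the rest, or the components split the basis. In either case there is a Whitehead automorphism (or a free splitting) showing that $U$ is not diskbusting, or not minimal. This is Stallings' observation that a disconnected Whitehead graph yields a free factor containing conjugates of all the words.

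Pairwise well-connectedness: by Menger's theorem, the number of edge-disjoint $a$--$a^{-1}$ paths equals $\min|\delta(A)|$ over sets $A$ separating $a$ from $a^{-1}$. Minimality forces $|\delta(A)|\ge \deg(a)$ for every such $A$. The cut $A=\{a\}$ gives exactly $\deg(a)$. So the minimum is $\deg(a)$, as required.

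For the converse, assume $W(U)$ is connected and pairwise well-connected.

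Minimality: pairwise well-connectedness gives $|\delta(A)|\ge\deg(a)$ for every Whitehead cut, so no single Whitehead automorphism shortens $U$. Whitehead's peak-reduction theorem then says that if some automorphism shortened $U$, a Whitehead automorphism would already do so. Hence $U$ is minimal.

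Diskbusting: suppose $U$ is not diskbusting, so $U$ lies in a free factor system $A*B$. I would invoke Stallings' cut-vertex lemma, which says that the Whitehead graph of a minimal, non-diskbusting list is either disconnected or has a cut vertex. I would then show that a cut vertex $x$ contradicts pairwise well-connectedness together with connectedness. Let $C$ be a component of $W(U)\setminus\{x\}$ not containing $x^{-1}$. The set $\{x\}\cup C$ separates $x$ from $x^{-1}$, and its boundary consists only of edges leaving $x$ into other components. That count is strictly less than $\deg(x)$ unless $C$ carries no edges from $x$, which is impossible since $C$ is a component adjacent to $x$. So $x$ has fewer than $\deg(x)$ edge-disjoint paths to $x^{-1}$, contradicting pairwise well-connectedness. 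If $x^{-1}$ lies in every component other than one, I would argue symmetrically using the component not containing $x^{-1}$. The degenerate case where all edges at $x$ go into the component of $x^{-1}$ does not yield a genuine cut vertex.

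The main obstacle is Stallings' lemma itself, namely that a non-diskbusting minimal list has a cut vertex in its Whitehead graph. This is genuinely nontrivial and would be cited from \cite{Stallings+1999+317+330} rather than reproved; the rest is bookkeeping with Menger's theorem and the cut-counting formula.
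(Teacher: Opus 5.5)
Your proposal is correct. It is the standard derivation behind the paper's citation; the paper gives no argument of its own. Whitehead's length-change formula plus Menger's theorem gives minimality $\Leftrightarrow$ pairwise well-connectedness. The Whitehead--Stallings cut-vertex lemma then handles diskbusting, once combined with your observation that a cut vertex $x$ produces an $x$--$x^{-1}$ cut of size $\deg(x)-e(x,C)<\deg(x)$.

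The one loose spot is the connectedness step in the forward direction. It is cleaner to split into two cases. If a component contains $a$ but not $a^{-1}$, it is either a cut of size $0<\deg(a)$, contradicting minimality, or the isolated vertex of an unused letter, contradicting diskbusting. Otherwise every component is inverse-closed, so the components partition the basis into a free splitting. Alternatively, you can simply cite Stallings' result that a disconnected Whitehead graph forces separability.
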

  \defi[\cite{kim2010}]\upshape
  Given a list $U$ of cyclically reduced words in $F_n$, a non-empty finite list $\mathcal{C}$ of cycles of $W(U)$ is called \textit{balanced} if\begin{itemize}
      \item[(i)] $\mathcal{C}$ has at least one cycle of length at least three.
      \item[(ii)] For each pair of edges $e$ and $f$ incident with a vertex $v^{-1}$, the number of cycles in $\mathcal{C}$ containing both $e$ and $f$ is equal to the number of cycles in $\mathcal{C}$ containing both $e^v$ and $f^v$. 
  \end{itemize} 
  \ed
  \begin{proposition}[{\cite[Lemma 10]{kim2010}}]\label{lemma_10}
      Let $n>1$. A list $U$ of cyclically reduced words in $F_n$ is polygonal if and only if $W(U)$ admits a balanced list of cycles.
  \end{proposition}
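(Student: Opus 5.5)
The plan is to prove both directions by translating between a $U$-polygonal surface and the local structure of its vertex links. The bridge is the identification, noted above, of the link of the single vertex of $Z(U)$ with $W(U)$. The edge-pairing $\phi_{a,U}$ encodes how a corner of a polygon at the head of an $a$-edge continues to the next corner at the tail of that edge. A locally injective map $S^{(1)}\to \operatorname{Cay}(F_n)/F_n$ whose polygon boundaries read powers of words in $U$ extends to a cellular map $S\to Z(U)$, branched-cover-like on the $2$-cells. For each vertex $x$ of $S$, the link $\operatorname{lk}(x)$ is a cycle. Its image in $W(U)$ is a closed walk, and local injectivity makes consecutive edges of this walk distinct and sends distinct edges of $\operatorname{lk}(x)$ to walks through distinct edges at each vertex. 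So each vertex of $S$ yields a cycle of $W(U)$, which may be traversed with multiplicity as a closed walk; we will need to handle this.

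For ``polygonal $\Rightarrow$ balanced'', take $\mathcal{C}$ to be the list of images of the vertex links of $S$. Condition (ii) should follow from the side-pairing. Each edge $\epsilon$ of $S$ mapped to a loop $a$ has endpoints $x,y$. A corner of $\operatorname{lk}(y)$ at the head of $\epsilon$, formed by the two polygon sides adjacent to $\epsilon$, corresponds under $\phi$ to the corner of $\operatorname{lk}(x)$ at the tail of $\epsilon$. Concretely, a pair $\{e,f\}$ at $v^{-1}$ occurring consecutively in some link corresponds bijectively to the pair $\{e^v,f^v\}$ occurring at the other end of the same edge of $S$. This gives a bijection between occurrences, so the counts agree.

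Two issues remain, and the second is the main obstacle. First, a link may map to a closed walk that repeats vertices rather than an honest cycle. I would either pass to a finite cover of $S$, or refine the statement by counting ``cycles containing both $e$ and $f$'' with multiplicity of consecutive occurrence; I expect \cite{kim2010} uses the latter convention. Second, condition (i) must come from the Euler characteristic hypothesis. Writing $V,E,F=m$ for the cell counts of $S$, we have $\chi(S)<m$, hence $V<E$. If every link had length at most $2$, then $2E=\sum_x \deg(x)\le 2V$, a contradiction. So some link has length at least $3$, which gives a cycle of length at least three.

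For ``balanced $\Rightarrow$ polygonal'', reverse the construction. Take one vertex $x_C$ for each $C\in\mathcal{C}$ and form a disk neighborhood whose link is $C$. Each edge $e$ of $C$ at a vertex $a$ gives a half-edge, labelled $a$, emanating from $x_C$. The balancing condition lets us choose, for each $a\in\mathcal{A}_n$, a bijection between the corners at $a^{-1}$ and the corners at $a$ compatible with $\phi_{a,U}$. Gluing half-edges along these bijections produces edges of $S^{(1)}$ with local injectivity built in. Following corners around then traces boundary cycles that read words from $U$ (powers, since a cycle may wrap several times), which we cap with polygons $P_i$. Finally, the length-at-least-three cycle gives $2E=\sum\deg> 2V$, hence $\chi(S)=V-E+m<m$. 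The fiddly step is making the matching of corners well defined when $C$ passes the same edge pair more than once, which again needs the multiplicity convention; this requires $n>1$ only to rule out degenerate Whitehead graphs.
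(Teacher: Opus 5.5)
Your architecture is the right one. Your converse is the construction the paper carries out in the appendix, there in dual form: one polygon $V_i$ per cycle $C_i$, with sides the corners $(x,\{e,f\})$ of $C_i$, glued by $\sim_0$. The $U$-polygonal structure is the dual complex $\Gamma^*$, in which $V_i$ is a vertex with link $C_i$.

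Two small corrections to your primal description of the converse. Half-edges at $x_C$ correspond to vertices of $C$, and sectors correspond to edges of $C$. The gluing must also put the sector $e$ on the same side of the new edge as the sector $e^a$; that is what makes face boundaries run $x_{j+1}x_{j+2}\cdots$ around a word of $U$. The Euler characteristic step and the bijection across an edge of $S$ are fine.

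What goes wrong is your ``first issue'', which you misdiagnose. Local injectivity of $S^{(1)}\to\operatorname{Cay}(F_n)/F_n$ says exactly that distinct half-edges at a vertex $x$ go to distinct half-edges of the rose. In other words, the vertices of $\operatorname{lk}(x)$ map injectively to vertices of $W(U)$. So a link image never revisits a vertex. If $\deg x\ge 3$ it is an honest cycle, each pair at a vertex occurs in it at most once, and ``number of cycles containing $e$ and $f$'' is literally the number of occurrences. No multiplicity convention is needed.

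Neither of your remedies would work anyway. A finite cover of $S$ has the same link images. Allowing closed walks that revisit vertices changes the statement and breaks your converse, since such a walk would give two half-edges at $x_C$ with the same label and direction. For the same reason, the ``fiddly step'' in your converse never arises.

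The case you actually miss is the one where your claim ``consecutive edges of the walk are distinct'' is false: a vertex of valence two whose two corners map to the same edge $\epsilon$ of $W(U)$. This does occur in $U$-polygonal surfaces as defined; for example, add to any one a sphere made of two copies of a disk reading $u$, glued side to side. Its image is not a cycle, so it cannot be put in $\mathcal{C}$.

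The repair uses your own correspondence across edges:
\begin{enumerate}
\item The two corners at the head of an edge labelled $a$ map to $e,f$, and those at its tail map to $\phi_{a,U}(e),\phi_{a,U}(f)$. So one pair is degenerate if and only if the other is.
\item At a vertex of valence at least three, or at a bigon mapping to two parallel edges, the two corners next to any half-edge map to distinct edges.
\item Hence degenerate vertices are adjacent only to degenerate ones.
\end{enumerate}
Discard them. The bijection between occurrences of pairs of distinct edges restricts to the remaining vertices. A vertex of valence at least three survives, since one exists because $V<E$. So the remaining link images form a balanced list.

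You should also fix, for each $P_i$, an identification of $\partial P_i$ with the cyclic word $u^k$. This makes each corner map to a definite edge of $W(U)$ when there are parallel edges or $u$ is a proper power.
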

  In light of \cref{minimal_diskbusting_and_pairwise}, one can convert the properties of a minimal, diskbusting list of words into graph-theoretic properties on the corresponding Whitehead graph. Furthermore, by \cref{lemma_10}, the notion of polygonality is also converted into a combinatorial property, which together shows that Tiling Conjecture is equivalent to the following conjecture.
   \begin{conjecture}[{\cite[Conjecture 11]{kim2010}}]\label{conj11}
       A connected and pairwise well-connected Whitehead graph on at least four vertices admits a balanced list of cycles.
   \end{conjecture}
   \section{Proof of \cref{main_thm}}\label{main_proof_section}
   We introduce the notion of \emph{uniform} lists of cycles of a Whitehead graph.
   \defi\upshape
   Given a list $U$ of cyclically reduced words in $F_n$, a non-empty finite list $\mathcal{C}$ of cycles of $W(U)$ is called \textit{uniform} if every edge appears in the same number of cycles in $\mathcal{C}$.
   \ed
   In \cite{kim2010}, Kim and Oum proved that a connected, pairwise well-connected Whitehead graph $W$ admits a uniform balanced list of cycles if $W$ has four vertices, or if $W$ is a regular graph. This motivates us to make the following conjecture, which is obviously stronger than \cref{conj11}.
\begin{conjecture}\label{stronger}
       A connected and pairwise well-connected Whitehead graph on at least four vertices admits a uniform and balanced list of cycles.
   \end{conjecture}
   \begin{note}\upshape\label{imp_remark}
   Let $W$ be a Whitehead graph having $2n$ vertices. Then \cref{stronger} is true if $n=2$ or if the graph $W$ is a regular graph, as stated in {\cite[Theorem 12, Theorem 24]{kim2010}}. We use induction to prove a special case of \cref{stronger}, which implies \cref{main_thm}. 
   \end{note}
   \subsection{Inductive arguments}
   For any graph $G$, a graph $H$ is called a \textit{subdivision} of $G$ if $H$ is obtained from $G$ by replacing each edge by a path of length at least one. Firstly, we have the following proposition.
   \begin{proposition}\label{34}
   Assume that $G$ is a Whitehead graph and $H$ is another Whitehead graph which is a subdivision of $G$ (obtained by associating new letters and their inverses to new vertices). Then
   \begin{itemize}
      \item[i)] If $H$ is connected and pairwise well-connected, then so is $G$. 
   \item[ii)] If \cref{stronger} holds for $G$, then \cref{stronger} also holds for $H$.
   \end{itemize}
   \end{proposition}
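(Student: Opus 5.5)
The plan is to compare $G$ and $H$ through the natural correspondence between edges of $G$ and the paths (``chains'') of $H$ that subdivide them. Every new vertex of $H$, that is, a new letter $x$ or its inverse $x^{-1}$, has degree $2$. Every old vertex $v$ has the same degree in $H$ as in $G$, and there is a canonical bijection $\delta_H(v)\to\delta_G(v)$ sending an edge of $H$ at $v$ to the $G$-edge whose chain it begins. I will assume, as is implicit in the phrase ``$H$ is a Whitehead graph which is a subdivision of $G$'', that under this bijection the edge-pairing $\phi_{v,H}$ at each old vertex agrees with $\phi_{v,G}$. Concretely, $H=W(U')$ where $U'$ is obtained from $U$ by inserting the new letters inside the relevant length-two subwords. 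I will say this explicitly at the start, and justify it by noting that inserting new letters does not change which length-two cyclic subwords are read immediately before and after an occurrence of an old letter, up to the relabelling of edges along chains. I expect this bookkeeping step to be the main (if mild) obstacle.

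For i), connectedness of $G$ is immediate, since contracting chains is surjective on vertices and preserves adjacency. For pairwise well-connectedness, fix $a\in\mathcal{A}_n$ an old letter, and take $\deg_H(a)=\deg_H(a^{-1})$ edge-disjoint paths from $a$ to $a^{-1}$ in $H$. Since interior vertices of chains have degree $2$ and are not endpoints of these paths, a path entering a chain must traverse it entirely. Hence each path is a concatenation of whole chains and projects to a path in $G$ from $a$ to $a^{-1}$. Edge-disjointness in $H$ means that no chain is used by two paths, so the projected paths are edge-disjoint in $G$, and there are $\deg_G(a)$ of them. New letters of $H$ are not letters of $G$, so nothing further is needed.

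For ii), let $\mathcal{C}$ be a uniform balanced list of cycles in $G$ in which every edge lies in exactly $k\ge1$ cycles. Let $\mathcal{C}_H$ be the list obtained by replacing each $G$-edge of each cycle by its chain. I then check the three required properties in turn.
\begin{enumerate}
\item Uniformity: every $H$-edge lies in exactly $k$ cycles of $\mathcal{C}_H$, namely those containing the underlying $G$-edge.
\item Condition (i) of balancedness: subdivision does not shorten cycles, so a cycle of length at least three remains one.
\item Condition (ii) at an old vertex $v^{-1}$: a pair of edges $e,f$ at $v^{-1}$ in $H$ corresponds to a pair of $G$-edges, the cycles containing both correspond bijectively, and by the pairing compatibility above $e^v,f^v$ correspond to the images under $\phi_{v,G}$. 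So the equality is inherited from $G$.
\end{enumerate}
Condition (ii) at a new vertex $x^{-1}$ (or $x$) comes from uniformity. Here $\delta(x^{-1})=\{e,f\}$, and every cycle through $e$ passes through the degree-$2$ vertex $x^{-1}$ and so also uses $f$; hence exactly $k$ cycles contain both. The same argument at $x$ shows that exactly $k$ cycles contain $e^x$ and $f^x$, which give the equality. If one also allows $e=f$, both counts are again $k$ by uniformity. This is precisely why the uniform strengthening \cref{stronger} is the right inductive statement, rather than \cref{conj11}. Finally, $H$ has at least as many vertices as $G$, so the four-vertex hypothesis carries over, and \cref{stronger} holds for $H$.
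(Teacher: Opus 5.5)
Your argument matches the paper's proof: you lift each cycle of a uniform balanced list of $G$ by replacing every edge with its chain, get equal counts (both $k$) at the degree-two new vertices from uniformity, and inherit the counts at old vertices through compatibility of the edge-pairings, which the paper also uses tacitly when it says the same holds for $e^v,f^v$; your part i) just spells out what the paper calls obvious. You should drop your ``concrete'' justification of that compatibility. Inserting a letter $v$ into a subword $pq$ replaces the edge joining $p$ and $q^{-1}$ by the edges $p$--$v^{-1}$ and $v$--$q^{-1}$, which end at two different new vertices. So subdivisions in which $v$ and $v^{-1}$ lie on different edges of $G$ (as in Figure~\ref{fig1}, used in Lemma~\ref{inductive_process}) generally do not arise from a list for $G$ by inserting letters. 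Instead, read the compatibility into the hypothesis, or arrange it by giving $G$ the pairing induced from $H$ at the old vertices. This is legitimate because any loopless graph with bijections $\phi_a\colon\delta(a)\to\delta(a^{-1})$ satisfying $\phi_{a^{-1}}=\phi_a^{-1}$ is the Whitehead graph of some list, obtained by tracing the words.
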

   \begin{proof}
   The first part is quite obvious, while the second statement has been implicitly written in \cite{kim2010}. For completeness, we provide a detailed proof of the second part. Let $\mathcal{C}$ be a uniform and balanced list of cycles of $G$. Replace each edge of a cycle in $\mathcal{C}$ with the corresponding path in $H$, we obtain a list $\mathcal{C}'$ of cycles in $H$. For each pair $e, f$ of edges in $H$ that are incident with a vertex $v$ not in $G$, because the path $ef$ corresponds to an edge of $G$, therefore, the number of cycles in $\mathcal{C}'$ containing both $e$ and $f$ equals the number of appearances of the corresponding edge of $G$ in $\mathcal{C}$. The same situation holds for $e^v, f^v$. As each edge in $G$ appears the same number of times in $\mathcal{C}$ (since $\mathcal{C}$ is a uniform and balanced list of cycles), it follows that the number of cycles in $\mathcal{C}'$ containing both $e$ and $f$ is equal to the number of cycles in $\mathcal{C}'$ containing both $e^v$ and $f^v$. Meanwhile, for each pair $e, f$ of edges in $H$ that are incident with a vertex $v$ in $G$, the number of cycles in $\mathcal{C}'$ containing both of them equals the number of cycles in $\mathcal{C}$ containing both of the corresponding two edges in $G$. Similarly, the same situation holds for $e^v, f^v$. Therefore, the list $\mathcal{C}'$ is balanced and it is also obviously uniform. 
   \end{proof}
   \bl\label{inductive_process}
Let $k\ge 2$ be a positive integer. If \cref{stronger} is true for all Whitehead graphs having $2m$ vertices and at most $2m+k$ edges, for all $m\in \{2, \ldots, k-1\}$, then \cref{stronger} is true for all Whitehead graphs having $2n$ vertices and at most $2n+k$ edges, for all $n\in\mathbb{N},$ $n\ge 2$.
\el
\begin{proof}
Let $G$ be a Whitehead graph having $2n$ vertices and at most $2n+k$ edges. We proceed by induction on $n$. Firstly, if $n\in \{2, \ldots, k-1\}$, \cref{stronger} is true by the hypothesis. If for some integer $n\geq k-1$, \cref{stronger} is true, we prove \cref{stronger} for all Whitehead graphs having $2(n+1)$ vertices and at most $2(n+1)+k$ edges. \smallbreak
    Because $n+1\ge k$, either $G$ is $3$-regular or there exists a pair of vertices $v, v^{-1}$ such that $\operatorname{deg}(v) = \operatorname{deg}(v^{-1}) = 2$. If the former case happens, then \cref{stronger} holds for $G$ by {\cite[Theorem 12]{kim2010}}. If the latter case happens, consider the following possibilities.
    \begin{itemize}
\item $v$ and $v^{-1}$ are not adjacent. Let $\delta(v) = \{x, y\}$ and $\delta(v^{-1}) = \{z, t\}$. If $x = y$, then by removing $\operatorname{deg}(x) - 2$ edges connecting $x$ and vertices in $V(G)\setminus \{v\}$, we see that $x$ and $x^{-1}$ are disconnected, violating the condition of pairwise well-connectedness. Therefore $x\neq y$, and similarly, we have $z\neq t$. Delete vertices $v, v^{-1}$ of $V(G)$, and add a new edge $e_0$ between $x$ and $y$, a new edge $e_1$ between $z$ and $t$, we obtain a new graph $G'$. The graph $G'$ is a Whitehead graph having $2n$ vertices and at most $2n+k$ edges. It can also be seen that $G$ is a subdivision of $G'$ in the sense of Proposition \ref{34}. By Proposition \ref{34}, $G'$ is connected and pairwise well-connected, thus \cref{stronger} holds for $G'$ by the induction hypothesis. By Proposition \ref{34} again, \cref{stronger} holds for $G$.  
\begin{figure}[H]
    \centering
    \resizebox{0.7 \textwidth}{!}{\begin{tikzpicture}[
    dot/.style={circle, draw, fill=#1, inner sep=1.5pt, minimum size=4pt},
    dot/.default=blue
]

    \node[dot] (x1) at (0,0) [label=below:$x$] {};
    \node[dot=red] (v) at (0.3,1.5) [label=left:$v$] {};
    \node[dot] (y1) at (1.2,2.5) [label=above:$y$] {};
    \draw (x1) -- (v) -- (y1);

    \node[dot] (z1) at (2.5,2.5) [label=above:$z$] {};
    \node[dot=red] (vi) at (3.5,1.5) [label=right:$v^{-1}$] {};
    \node[dot] (t1) at (3.8,0) [label=below:$t$] {};
    \draw (z1) -- (vi) -- (t1);

    \draw[-latex,thick] (5.5,1.25) -- (7.5,1.25);

    \node[dot] (x2) at (9,0) [label=below:$x$] {};
    \node[dot] (y2) at (10.3,2.5) [label=above:$y$] {};
    \draw (x2) -- (y2);

    \node[dot] (z2) at (11.5,2.5) [label=above:$z$] {};
    \node[dot] (t2) at (12.8,0) [label=below:$t$] {};
    \draw (z2) -- (t2);

\end{tikzpicture}}
    \caption{Delete $v$, $v^{-1}$ and replace $(xv, vy)$ by $xy$, replace $(tv^{-1},v^{-1}z)$ by $tz$.}
    \label{fig1}
\end{figure}

\item $v$ and $v^{-1}$ are adjacent. It is not hard to see that $v$ is also adjacent to $x\neq v^{-1}$, and $v^{-1}$ is adjacent to $y\notin \{v, x\}$. Deleting vertices $v, v^{-1}$ of $V(G)$, and add a new edge $e_1$ between $x$ and $y$, we obtain a new graph $G'$. The graph $G'$ is a Whitehead graph having $2n$ vertices and at most $2n+k$ edges. 
\begin{figure}[H]
    \centering
    \begin{tikzpicture}[
    dot/.style={circle, draw, fill=#1, inner sep=1.5pt, minimum size=4pt},
    dot/.default=blue,
    >=latex 
]

    \node[dot] (x1) at (0,0) [label=below:$x$] {};
    \node[dot=red] (v) at (0.2,1.5) [label=left:$v$] {};
    \node[dot=red] (vi) at (2.8,1.4) [label=right:$v^{-1}$] {};
    \node[dot] (y1) at (3,0) [label=below:$y$] {};
    
    \draw (x1) -- (v) -- (vi) -- (y1);

    \draw[->, thick] (3.8,0.7) -- (5.5,0.7);

    \node[dot] (x2) at (6.2,0.4) [label=below:$x$] {};
    \node[dot] (y2) at (8.5,0.6) [label=below:$y$] {};
    
    \draw (x2) -- (y2);

\end{tikzpicture}
    \caption{Delete $v$, $v^{-1}$ and replace $(xv, vv^{-1}, v^{-1}y)$ by $xy$.}
    \label{fig2}
\end{figure}
Similarly as above, $G$ is also a subdivision of $G'$ in the sense of Proposition \ref{34}, thus $G'$ is connected and pairwise well-connected. Again, by the induction hypothesis, \cref{stronger} holds for $G'$. By Proposition 3.4, \cref{stronger} holds for $G$.\qedhere
\end{itemize}
\end{proof}
\begin{note}
\upshape 
The fact that the degrees of $v$ and $v^{-1}$ are two is crucial. If $v$ and $v^{-1}$ have larger degrees, then there is a problem with the removal of $v$ and $v^{-1}$: which edges can represent all paths of the form $x \rightarrow v \rightarrow y$? A natural idea is to replace each path $x \rightarrow v \rightarrow y$ with the edge $xy$. However, this idea doesn't turn cycles into cycles. 
\end{note}
\subsection{Proof of \cref{main_thm}}
\defi
\upshape 
Let $U$ be a list of words in $F_n$, the sum of lengths of all words in $U$ is called the \textit{length} of $U$.
\ed
We introduce the notion of \emph{equivalence} of lists of words.    
\defi\upshape\label{defi:equivalence}
Let $U$ and $U'$ be lists of words in $F_n$, the free group of rank $n$ with a free basis $\mathcal{A}_n$. We say that $U$ and $U'$ are \textit{equivalent} if there exists a graph isomorphism $\psi$ from $W(U)$ to $W(U')$, such that $\psi(a) = a$, for all $a\in \mathcal{A}_n\cup\mathcal{A}^{-1}_n$.
\ed
\begin{note}\upshape
    From \cref{defi:equivalence}, it is not hard to see that if $U$ and $U'$ are equivalent lists of cyclically reduced words, then $U$ is minimal and diskbusting if and only if $U'$ is. However, the edge-pairings of $W(U)$ and $W(U')$ may be different. Furthermore, if two lists $U$ and $U'$ have the same list of length-two subwords, then $U$ and $U'$ are equivalent. 
\end{note}
\bl\label{single}
If $U$ is a minimal diskbusting list of words in $F_n$, then $U$ is equivalent to a list having a single word in $F_n$ having the same length as $U$.
\el
\begin{proof}
    Suppose that the list $U$ contains more than one word, because $U$ is minimal and diskbusting, we can assume without loss of generality that there exist $w_1, w_2\in U$ such that two words $w_1$ and $w_2$ both contain a letter $a\in \mathcal{A}_n$. By cyclically permuting $w_1$ and $w_2$, we obtain two new words $u_1$ and $u_2$ both having letter $a$ as their first letters. Observe that the list of length-two subwords of $[w_1, w_2]$ is the same as the list of length-two subwords of $[u_1\cdot u_2]$, and the two lists have the same length. By repeatedly doing the above procedure, one sees that the list $U$ is equivalent to a list containing a single word with the same length as $U$.
\end{proof}
\begin{proposition}\label{6_10_case}
Let $G$ be a connected and pairwise well-connected Whitehead graph having $6$ vertices and $10$ edges. If $G$ does not contain vertices of valence $2$, then there exists a list $\mathcal{C}$ of cycles of $G$, satisfying the following conditions.
\begin{itemize}
    \item[(i)] Each edge of $G$ appears exactly in exactly $6$ cycles in $\mathcal{C}$.
    \item[(ii)] Each vertex has valence either three or four. Moreover, for $d\in \{3, 4\}$, if $v$ has valence $d$, then for each pair of distinct edges $e, f\in \delta(v)$, there are exaclty $(6-d)$ cycles in $\mathcal{C}$ containing both $e$ and $f$. 
\end{itemize}
In particular, \cref{stronger} is true for all Whitehead graphs having $6$ vertices and at most $10$ edges.
\end{proposition}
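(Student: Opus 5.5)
Throughout, a graph with $2\cdot 3$ vertices and $10$ edges has degree sum $20$. Each pair $v,v^{-1}$ has equal degrees, since the edge-pairing is a bijection $\delta(v)\to\delta(v^{-1})$. So the three pairs have degrees $d_1,d_2,d_3$ with $d_1+d_2+d_3=10$. If no vertex has valence $2$, pairwise well-connectedness gives $d_i\ge 3$ (valence $1$ is impossible). Hence $\{d_1,d_2,d_3\}=\{3,3,4\}$, which gives the first clause of (ii).

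\textbf{Step 1: reduction to a finite enumeration.} Up to relabelling letters and swapping $a\leftrightarrow a^{-1}$, there are only finitely many connected, pairwise well-connected multigraphs on $\{a^{\pm1},b^{\pm1},c^{\pm1}\}$ with degree sequence $(3,3,3,3,4,4)$ and with $\deg a=\deg a^{-1}$, and so on. I would enumerate them, using the edge-disjoint path condition between $v$ and $v^{-1}$ to prune. This condition excludes, for instance, a $4$-valent pair joined to each other by too many parallel edges. The condition in (ii) concerns pairs of edges at a single vertex, and the edge-pairing only enters through the balanced condition. If every vertex of valence $d$ has each pair of edges covered exactly $6-d$ times, then condition (ii) of the balanced definition holds automatically for any edge-pairing. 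The count on both sides is $6-\deg$, and $\deg v=\deg v^{-1}$. So it suffices to produce, for each graph in the list, a cycle list with the stated covering numbers.

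\textbf{Step 2: the covering counts.} The numbers are consistent. At a $3$-valent vertex each edge lies in $2$ pairs, each covered $3$ times, giving $6$ cycles per edge. At a $4$-valent vertex each edge lies in $3$ pairs, each covered twice, again giving $6$. So condition (i) follows from (ii) and need not be checked separately. To build $\mathcal{C}$, I would find for each graph a small family of cycles and take nonnegative integer combinations. The target is that every transition (pair of edges at a vertex) is used exactly $6-d$ times. For $3$-valent vertices the natural building block is a cycle double cover type structure. Concretely, I would look for a list where each transition at a degree-$3$ vertex appears $3$ times and at a degree-$4$ vertex $2$ times. One route is to use a few Hamiltonian or near-Hamiltonian cycles and their symmetric images under the graph's automorphisms. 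Averaging over an automorphism group acting transitively on transitions makes the counts equal automatically, and scaling gives the exact numbers.

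\textbf{Step 3: the main obstacle and the conclusion.} The main obstacle is the case analysis itself: several non-isomorphic multigraphs, possibly with parallel edges or an edge $vv^{-1}$, each needing an explicit list. I would organise the cases by the placement of the two $4$-valent vertices, depending on whether they form a pair $c,c^{-1}$ and whether they are adjacent. I would then exhibit explicit lists, perhaps verified by a small linear-algebra (transition-count) check. For cycles of length $2$ arising from parallel edges, I would still need some cycle of length $\ge3$. This is automatic since the graph is connected on $6$ vertices and not all transitions can be covered by digons.

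For the final clause, take a Whitehead graph with $6$ vertices and at most $10$ edges that is connected and pairwise well-connected, so every degree is at least $2$.
\begin{itemize}
\item If it has a valence-$2$ pair, contract it as in the proof of \cref{inductive_process}. This yields a $4$-vertex graph, which satisfies \cref{stronger} by \cref{imp_remark}, and \cref{34} then gives \cref{stronger} for the original graph.
\item If there are fewer than $10$ edges and no valence-$2$ vertex, the degree sum forces $d_i\ge3$ with sum at most $9$, so the graph is $3$-regular and \cref{imp_remark} applies.
\item Otherwise we are in the situation of the proposition just proved, and it applies.
\end{itemize}
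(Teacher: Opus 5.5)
Your preliminary steps are sound, and your overall strategy (enumerate finitely many graphs, then find cycle multiplicities for each) is the paper's. Specifically:
\begin{itemize}
\item The degree multiset is forced to be $\{3,3,4\}$. Valence $1$ is indeed impossible: a leaf $v$ is either joined only to $v^{-1}$, which disconnects $G$, or its edge to its neighbour $x$ lies on no $x$--$x^{-1}$ path.
\item Condition (ii) does not depend on the edge-pairing.
\item Condition (i) follows from (ii), since an edge at a $d$-valent vertex lies in $(d-1)(6-d)=6$ cycles.
\item A cycle of length at least three is forced.
\item Your derivation of the final clause is correct and more explicit than the paper's: contract a valence-$2$ pair down to four vertices, or note that fewer than $10$ edges forces $3$-regularity.
\end{itemize}
The gap is that the proposition \emph{is} the finite verification, and your proposal stops where the proof has to begin. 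You do not carry out the enumeration. With $\deg a=\deg A=4$, the paper splits by the number $0,1,2,3$ of edges between $a$ and $A$ and obtains twenty graphs, realised by the words $w_0,\dots,w_{19}$. The two $4$-valent vertices are automatically an inverse pair, so your split by ``whether they form a pair'' is vacuous, and ``whether they are adjacent'' is too coarse. You also do not exhibit a cycle list for even one graph.

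The one concrete mechanism you suggest does not close this gap. Automorphisms preserve valence, so at a $3$-valent vertex whose neighbours have different valences, the transitions fall into classes that no automorphism exchanges. Already for $w_0$, at $b$ (adjacent to $a$, $A$, $c$) the transition $\{ba,bA\}$ can never be carried to $\{ba,bc\}$. Hence averaging over $\operatorname{Aut}(G)$ cannot equalise the transition counts. The multiplicities must be chosen so that different orbits balance: graph by graph, you must solve the nonnegative integer system given by the cycle--edge and cycle--transition incidence matrices. That is exactly what the paper does with $M_w$, $N_w$ and $v_w$ (explicitly for $w_0$ in \eqref{eq:case1a}, by computer for the other cases). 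Such solutions need not exist in general; compare the counterexample in Section~\ref{discussion}, a connected, pairwise well-connected graph admitting no uniform list with constant transition counts. So existence in each of the twenty cases is the real content of the proposition, and your proposal asserts it rather than proving it.
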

\begin{proof}
    Let $F_3$ be the free group on three letters $a, b, c$, denote $A = a^{-1}, B = b^{-1}, C = c^{-1}$. Let $G = (V(G), E(G))$ be a connected and pairwise well-connected Whitehead graph having $6$ vertices and $10$ edges, we have $V(G) = \{a, A, b, B, c, C\}$. Without loss of generality, assume that $\operatorname{deg}(a) = \operatorname{deg}(A) = 4$, and $\operatorname{deg}(b) = \operatorname{deg}(B) = \operatorname{deg}(c) = \operatorname{deg}(C) = 3$. First, observe that if $G$ has parallel edges, they must be edges between the pair $(a, A)$, the pair $(b, B)$ or the pair $(c, C)$.\smallbreak 
    Since the statement of \cref{6_10_case} does not depend on edge-pairings, we can identify $G$ with a Whitehead graph of a single word by \cref{single}. That allows us to prove \cref{6_10_case} by considering all possible Whitehead graphs, aided by a computer. In each case, if $G$ is the Whitehead graph under consideration, then $G$ is identified with the Whitehead graph of a list having one word, from the word $w$ in the list, we get a matrix $M_w$ whose rows are incidence vectors of cycles in $G$, and a matrix $N_w$ whose the $(i, j)$ entry equals $1$ if the $i^{th}$ cycle contains the $j^{th}$ pair of adjacent edge, and equals $0$ otherwise. The problem now is to find a nonzero row vector $v_w$ consisting of nonnegative integer entries, such that the list of cycles corresponding to $v_w$ satisfies the conditions stated in \cref{6_10_case}. We give the explicit matrices and vectors for one case, the details for the other cases are given in the ancillary files.    
    \begin{enumerate}
        \item If there is no edge between $a$ and $A$, then $a$ and $A$ must be adjacent to $b, B, c, C$. Thus there are at most one edge between $b, B$, similarly for $c, C$.
        \begin{enumerate}
            \item If there is no edge between $b$ and $B$, and between $c$ and $C$, then we can assume that $bc, BC\in E(G)$. Then $G$ corresponds to the word $w_0 = 'acAbAcabCb'$. Once we have the word $w_0$, from the list of length-two subwords of $w_0$, we have a natural way to label the edges of $G$. In this case, the graph $G$ has no parallel edges, so we can represent each edge by its two endpoints as follows. 
            \begin{equation*}
                bA: 0, aC: 1, ca: 2, AB: 3, ba: 4, AC: 5, cA: 6, aB: 7, bc: 8, CB: 9.
            \end{equation*}We also arrange the set of all pairs of adjacent edges in $G$ by the lexicographical ordering, based on the above labelling of the edges. For example, the first pair is $(0, 3)$, corresponding to the pair of edges $(bA, AB)$ of $G$.
            \begin{eqnarray*}
                0: (0, 3), 1: (0, 4), 2: (0, 5), 3: (0, 6), 4: (0, 8), 5: (1, 2),\\ 6: (1, 4), 7: (1, 5), 8: (1, 7), 9: (1, 9), 10: (2, 4), 11: (2, 6),\\ 12: (2, 7), 13: (2, 8), 14: (3, 5), 15: (3, 6), 16: (3, 7), 17: (3, 9),\\ 18: (4, 7), 19: (4, 8), 20: (5, 6), 21: (5, 9), 22: (6, 8), 23: (7, 9).
            \end{eqnarray*}Having fixed the ordering of the edges and the pairs of adjacent edges, we have the matrices $M_{w_0}$ and $N_{w_0}$ as follows.
            \begin{equation*}
                M_{w_0} = \begin{pmatrix}
                    0 &0& 0& 0& 1& 1& 1& 1& 1& 1\\
                    0 &0 &0 &1 &0 &1 &0& 0& 0& 1\\
 0 &0 &0 &1 &1 &0 &1& 1& 1& 0\\
 0 &0 &1 &0 &0 &1 &1 &1 &0 &1\\
 0 &0 &1 &0 &1 &0 &0 &0 &1 &0\\
 0 &0 &1 &1 &0 &0 &1 &1 &0 &0\\
 0 &1 &0 &0 &0 &0 &0 &1 &0 &1\\
 0 &1 &0 &0 &1 &1 &1 &0 &1 &0\\
 0 &1 &0 &1 &0 &1 &0 &1 &0 &0\\
 0 &1 &0 &1 &1 &0 &1 &0 &1 &1\\
 0 &1 &1 &0 &0 &1 &1 &0 &0 &0\\
 0 &1 &1 &1 &0 &0 &1 &0 &0 &1\\
 1 &0 &0 &0 &0 &0 &1 &0 &1 &0\\
 1 &0 &0 &0 &1 &1 &0 &1 &0 &1\\
 1 &0 &0 &1 &1 &0 &0 &1 &0 &0\\
 1 &0 &1 &0 &0 &1 &0 &1 &1 &1\\
 1 &0 &1 &0 &1 &0 &1 &0 &0 &0\\
 1 &0 &1 &1 &0 &0 &0 &1 &1 &0\\
 1 &1 &0 &0 &1 &1 &0 &0 &0 &0\\
 1 &1 &0 &1 &1 &0 &0 &0 &0 &1\\
 1 &1 &1 &0 &0 &1 &0 &0 &1 &0\\
 1 &1 &1 &1 &0 &0 &0 &0 &1 &1
                \end{pmatrix}.
            \end{equation*}
            \begin{equation*}
                N_{w_0} = \begin{pmatrix}
 0 &0 &0 &0 &0 &0 &0 &0 &0 &0 &0 &0 &0 &0 &0 &0 &0 &0 &1 &1 &1 &1 &1 &1\\
 0 &0 &0 &0 &0 &0 &0 &0 &0 &0 &0 &0 &0 &0 &1 &0 &0 &1 &0 &0 &0 &1 &0 &0\\
 0 &0 &0 &0 &0 &0 &0 &0 &0 &0 &0 &0 &0 &0 &0 &1 &1 &0 &1 &1 &0 &0 &1 &0\\
 0 &0 &0 &0 &0 &0 &0 &0 &0 &0 &0 &1 &1 &0 &0 &0 &0 &0 &0 &0 &1 &1 &0 &1\\
 0 &0 &0 &0 &0 &0 &0 &0 &0 &0 &1 &0 &0 &1 &0 &0 &0 &0 &0 &1 &0 &0 &0 &0\\
 0 &0 &0 &0 &0 &0 &0 &0 &0 &0 &0 &1 &1 &0 &0 &1 &1 &0 &0 &0 &0 &0 &0 &0\\
 0 &0 &0 &0 &0 &0 &0 &0 &1 &1 &0 &0 &0 &0 &0 &0 &0 &0 &0 &0 &0 &0 &0 &1\\
 0 &0 &0 &0 &0 &0 &1 &1 &0 &0 &0 &0 &0 &0 &0 &0 &0 &0 &0 &1 &1 &0 &1 &0\\
 0 &0 &0 &0 &0 &0 &0 &1 &1 &0 &0 &0 &0 &0 &1 &0 &1 &0 &0 &0 &0 &0 &0 &0\\
 0 &0 &0 &0 &0 &0 &1 &0 &0 &1 &0 &0 &0 &0 &0 &1 &0 &1 &0 &1 &0 &0 &1 &0\\
 0 &0 &0 &0 &0 &1 &0 &1 &0 &0 &0 &1 &0 &0 &0 &0 &0 &0 &0 &0 &1 &0 &0 &0\\
 0 &0 &0 &0 &0 &1 &0 &0 &0 &1 &0 &1 &0 &0 &0 &1 &0 &1 &0 &0 &0 &0 &0 &0\\
 0 &0 &0 &1 &1 &0 &0 &0 &0 &0 &0 &0 &0 &0 &0 &0 &0 &0 &0 &0 &0 &0 &1 &0\\
 0 &1 &1 &0 &0 &0 &0 &0 &0 &0 &0 &0 &0 &0 &0 &0 &0 &0 &1 &0 &0 &1 &0 &1\\
 1 &1 &0 &0 &0 &0 &0 &0 &0 &0 &0 &0 &0 &0 &0 &0 &1 &0 &1 &0 &0 &0 &0 &0\\
 0 &0 &1 &0 &1 &0 &0 &0 &0 &0 &0 &0 &1 &1 &0 &0 &0 &0 &0 &0 &0 &1 &0 &1\\
 0 &1 &0 &1 &0 &0 &0 &0 &0 &0 &1 &1 &0 &0 &0 &0 &0 &0 &0 &0 &0 &0 &0 &0\\
 1 &0 &0 &0 &1 &0 &0 &0 &0 &0 &0 &0 &1 &1 &0 &0 &1 &0 &0 &0 &0 &0 &0 &0\\
 0 &1 &1 &0 &0 &0 &1 &1 &0 &0 &0 &0 &0 &0 &0 &0 &0 &0 &0 &0 &0 &0 &0 &0\\
 1 &1 &0 &0 &0 &0 &1 &0 &0 &1 &0 &0 &0 &0 &0 &0 &0 &1 &0 &0 &0 &0 &0 &0\\
 0 &0 &1 &0 &1 &1 &0 &1 &0 &0 &0 &0 &0 &1 &0 &0 &0 &0 &0 &0 &0 &0 &0 &0\\
 1 &0 &0 &0 &1 &1 &0 &0 &0 &1 &0 &0 &0& 1 &0 &0 &0 &1& 0 &0 &0 &0 &0 &0
                \end{pmatrix}.
            \end{equation*}
            By definition, the first row of $M_{w_0}$ and the first row of $N_{w_0}$ both correspond to the cycle having the edges $4, 5, 6, 7, 8, 9$, and we have analogous correspondences for the remaining rows of the matrices. Let \begin{equation}
                v_{w_0} = \begin{pmatrix}
                1& 0& 0& 1& 0& 0& 0& 0& 2& 2& 0& 0& 0& 1& 0& 0& 2& 1& 0& 0& 1& 1
            \end{pmatrix},\tag{3.1}\label{eq:case1a}
            \end{equation}we see that \begin{equation*}
                v_{w_0} \cdot M_{w_0} = \begin{pmatrix}
                    6 & 6 & 6 & 6 & 6 & 6 & 6 & 6 & 6 & 6
                \end{pmatrix},
            \end{equation*}
            \begin{equation*}
                v_{w_0} \cdot N_{w_0} = \begin{pmatrix}
                    2& 3 &2& 2 &3 &2& 2& 3& 2& 3& 2& 3& 2& 3& 2& 2& 3& 3& 2& 3& 2& 3& 3& 3
                \end{pmatrix},
            \end{equation*}and it is easy to check that the list of cycles corresponding to $v_{w_0}$ has at least one cycle of length at least three. This indicates that the list of cycles corresponding to $v_{w_0}$ satisfies the desired properties. 
            \item If $b$ and $B$ are adjacent, then $c$ and $C$ are adjacent. The graph $G$ corresponds to the word $w_1 = 'baBaCaccab'$.
        \end{enumerate}
        \item If there is one edge between $a$ and $A$, then assume that $a$ is also adjacent to $b$, $c$, and $C$. Then we have the following cases.\begin{enumerate}
            \item If $A$ is adjacent to $b$, $c$, and $C$, then $B$ must be adjacent to $b$, $c$, and $C$. Then, the graph $G$ corresponds to $w_2 = 'aacaCaBBcb'$.
            \item If $A$ is adjacent to $c, C$ and $B$, then either there is one edge between $b$ and $B$, this case corresponds to the word $w_3 = 'CaBaacaCbb'$,
            or there are two edges between $b$ and $B$, and one edge between $c$ and $C$, this case corresponds to the word $w_4 = 'BBBaCaccaa'$.
            \item If $A$ is adjacent to $b, C, B$, then $c$ must be adjacent to $B$. If $c$ is adjacent to $b$, we get the corresponding word $w_5 = 'aaBaCacBcb'$.
            If $c$ is not adjacent to $b$, then the vertices $b$ and $B$ are adjacent, also the vertices $c$ and $C$ are adjacent. We get the corresponding word being $w_6 = 'CCacbbaaBa'$.
        \end{enumerate}
        \item If there are two edges between $a$ and $A$, then we have the following cases.\begin{enumerate}
            \item If $a$ is adjacent to $b$ and $c$, and $A$ is adjacent to $B$ and $C$, and furthermore, the vertices $b$ and $B$ are not adjacent, then $b$ is adjacent to $c$ and $C$, and $B$ is adjacent to $c$ and $C$. In this case, the graph $G$ corresponds to the word $w_7 = 'CaBaaaCbcb'$.
            If there are two edges connecting $b$ and $B$, then there are two edges connecting $c$ and $C$, thus $G$ corresponds to $w_8 = 'CCCaaaBBBa'$.
If there is one edge connecting $b$ and $B$, then there is one edge between $c$ and $C$, we can assume that $b$ is adjacent to $c$ and $B$ is adjacent to $C$. In this case, the graph $G$ corresponds to $w_9 = 'CCaaaBBcBa'$.
            \item Assume that $a$ is adjacent to $b$ and $c$, and $A$ is adjacent to $c$ and $B$. Suppose $b$ is not adjacent to $C$, then because $\operatorname{deg}(C) = 3$, there must be at least 2 edges between $c$ and $C$, violating the condition that $\operatorname{deg}(c) = 3$. Thus $b$ is adjacent to $C$. Furthermore, we have $B$ is adjacent to $C$, and there is one edge connecting $b$ and $B$, one edge connecting $c$ and $C$. Hence, the graph $G$ corresponds to the word $w_{10} = 'ccaBaaaCbb'$.
            \item If $a$ is adjacent to $b$ and $c$, and if $A$ is adjacent to $c$ and $b$, then $B$ must be adjacent to $b$, $c$, and $C$, similarly, the vertex $C$ must be adjacent to $b$, $c$, and $C$, violating the condition that $\operatorname{deg}(c) = 3$.
            \item Assume that $a$ is adjacent to $b$ and $B$, and $A$ is adjacent to $c$ and $C$. Suppose that $b$ is not adjacent to $B$, then $b$ must be adjacent to both $C$ and $c$, similarly, the vertex $B$ must be adjacent to $c$ and $C$. Thus $G$ corresponds to the word $w_{11} = 'bAcaaabcbC'$.
If $b$ is adjacent to $B$, then $c$ is adjacent to $C$. Because we can interchange $c$ and $C$ in the above assumptions, without loss of generality, we can also assume that the vertices $b$ and $c$ are adjacent, the vertices $B$ and $C$ are adjacent. In this case, the graph $G$ corresponds to the word $w_{12} = 'bbCbAccaaa'$.
If there are two edges connecting $b$ and $B$, then there are two edges connecting $c$ and $C$, then by removing two edges between $a$ and $A$, we disconnect $a$ and $A$, which violates the condition of well-connectedness.
            \item If $a$ is adjacent to $b$ and $B$, and $A$ is adjacent to $b$ and $B$, then we can assume that $b$ is adjacent to $c$, and $B$ is adjacent to $C$. Then there are two edges connecting $c$ and $C$, thus $G$ corresponds to $w_{13}='BAAABaBccc'$.
            \item Assume that $a$ is adjacent to $b$ and $B$, and $A$ is adjacent to $b$ and $c$. Suppose that $b$ and $C$ are adjacent, then $C$ and $B$ are adjacent, and there is one edge connecting $c$ and $C$. The corresponding word is $w_{14}='ccbaaabACb'$.
If $b$ and $C$ are not adjacent, then $C$ must be adjacent to $B$, and there are two edges between $c$ and $C$. This case corresponds to the word $w_{15}='CCCbbaaabA'$.
        \end{enumerate}
        \item If $a$ and $A$ are connected by three edges, and $a$ is adjacent to $b$, consider the following cases.
        \begin{enumerate}
            \item If $A$ is adjacent to $B$, and $b$ is not adjacent to $B$, then $b$ and $B$ must be adjacent to both $c$ and $C$. The corresponding word is $w_{16}='bccbCbAAAA'$.
If $b$ is adjacent to $B$, then we can assume that $b$ is adjacent to $c$ as well. Then $B$ is adjacent to $C$, and there are two edges between $c$ and $C$. The corresponding word is $w_{17}='BBcccBaaaa'$.
            \item If $A$ is adjacent to $c$, then $B$ must be adjacent to $b$. If there is one edge between $b$ and $B$, then $B$ is adjacent to $c$ and $C$, and $C$ is adjacent to $b$ and $c$. The corresponding word is $w_{18}='ccbbcaaaaB'$.
Next, if there are two edges between $b$ and $B$, then $C$ is adjacent to $B$, and there are two edges between $c$ and $C$. The corresponding word is $w_{19}='BBBcccaaaa'$.
            \item Finally, if $A$ is adjacent to $b$, then $B$ must be adjacent to $b$, $c$, and $C$, and there are two edges between $c$ and $C$, but we can disconnect $b$ and $B$ by removing the edge between $b$ and $B$, violating the condition of pairwise well-connectedness.  
            \qedhere
        \end{enumerate}
    \end{enumerate}
\end{proof}
The following theorem in \cite{kim2010} serves as an important part in the base step for the inductive process of proving \cref{main_thm}.
\begin{theorem}[{\cite[Theorem 24]{kim2010}}]\label{thm24:kim2010}
    Let $G$ be the Whitehead graph of a minimal diskbusting list of words in $F_2$. Then $G$ admits a balanced list $\mathcal{C}$ of cycles such that each edge of $G$ appears in the same number of cycles in $\mathcal{C}$.
\end{theorem}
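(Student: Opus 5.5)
Since the statement concerns only $F_2$, I would prove it directly, with no induction, by making the Whitehead graph completely explicit. Write the vertices as $a, A=a^{-1}, b, B=b^{-1}$. Let $p$ be the number of edges between $a$ and $A$, and $q$ the number between $b$ and $B$. Let $n_{ab}, n_{aB}, n_{Ab}, n_{AB}$ be the numbers of the remaining edges. The conditions $\deg(a)=\deg(A)$ and $\deg(b)=\deg(B)$ give $n_{ab}+n_{aB}=n_{Ab}+n_{AB}$ and $n_{ab}+n_{Ab}=n_{aB}+n_{AB}$. Hence $n_{ab}=n_{AB}=:x$ and $n_{aB}=n_{Ab}=:y$, so $G$ is determined by $(p,q,x,y)$.

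By \cref{minimal_diskbusting_and_pairwise}, $G$ is connected and pairwise well-connected. Applying Menger to the cuts $\{a,b\}\mid\{A,B\}$ and $\{a,B\}\mid\{A,b\}$ gives
\begin{equation*}
p+y\ge x,\quad q+y\ge x,\quad p+x\ge y,\quad q+x\ge y,
\end{equation*}
and connectedness forces $x+y>0$. I expect these inequalities, together with connectedness, to be exactly the admissible parameter region.

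The key reduction avoids the edge-pairing altogether. Suppose $\mathcal{C}$ has the following \emph{pair-uniform} property: every edge lies in exactly $N$ cycles, and for each vertex $v$ every pair of distinct edges of $\delta(v)$ lies in exactly $\lambda_v$ cycles. Counting at $v$ gives $\lambda_v=N/(\deg v-1)$. Since $\deg v=\deg v^{-1}$, we get $\lambda_v=\lambda_{v^{-1}}$. The bijection $\phi_{v}$ therefore carries a pair with count $\lambda_{v^{-1}}$ to a pair with count $\lambda_v$, so $\mathcal{C}$ is balanced for every edge-pairing; it is also uniform. This is the same shape as conclusion (ii) of \cref{6_10_case}. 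It suffices to build such a list, plus one cycle of length at least three, which comes for free once $x+y>0$ and the list is non-empty.

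For the construction I would take symmetric families of cycles, each family consisting of all cycles of a given combinatorial type, ranging over all choices of parallel edges:
\begin{itemize}
\item digons on the $a$--$A$ edges, on the $b$--$B$ edges, on the $a$--$b$ edges, and so on;
\item triangles such as $a\,A\,b$ and $a\,b\,B$;
\item the two kinds of $4$-cycles $a\,b\,A\,B$ and $a\,b\,B\,A$, which use the $x$- and $y$-edges.
\end{itemize}
Averaging over parallel edges makes the pair counts depend only on the \emph{types} of the two edges at each vertex. The pair-uniform condition then becomes a small linear system in the family weights, with coefficients polynomial in $p,q,x,y$. I would solve it explicitly, possibly splitting into the cases $x\ge y$ versus $y\ge x$, which are symmetric under $b\leftrightarrow B$, and $p\ge q$ versus $q\ge p$.

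The main obstacle is showing that the solution has nonnegative weights on the whole admissible region. I expect the inequalities from the previous paragraph to be exactly what nonnegativity needs. The boundary cases such as $p+y=x$ are where triangle weights want to become negative, and there I would compensate with the $4$-cycles. If a closed form resists, I would fall back on a Farkas-type argument. In that version, a violating dual certificate on edge-type pairs would produce a vertex cut of size smaller than the degree, contradicting pairwise well-connectedness.
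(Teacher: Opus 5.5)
Your parametrization $(p,q,x,y)$ and the cut inequalities are correct, and a pair-uniform list would indeed be balanced for every edge-pairing. The gap is that such lists do not exist on part of the admissible region. This is not a sign problem at the boundary: the linear system is inconsistent there.

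Suppose $x\ge 2$, and let $e_0,e_1$ be two parallel $a$--$b$ edges. The only cycle containing both is the digon $e_0e_1$. Moreover, $\{e_0,e_1\}$ is a pair in $\delta(a)$ and also a pair in $\delta(b)$. So the multiplicity of this digon must equal both $\lambda_a=N/(p+x+y-1)$ and $\lambda_b=N/(q+x+y-1)$. Whenever $p\neq q$, this forces $N=0$, i.e.\ $\mathcal{C}=\emptyset$. Parallel $a$--$B$ edges give the same conclusion when $y\ge 2$.

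This region is admissible. The parameters $(p,q,x,y)=(1,0,2,2)$ satisfy all four of your inequalities, and give the Whitehead graph of the single cyclically reduced word $abaBAbABa$. That word is therefore minimal and diskbusting by \cref{minimal_diskbusting_and_pairwise}. It is exactly the graph of the Remark following \cref{simple_graphs}, where the paper shows that no list has constant pair counts at $a,A$ and at $b,B$. Your Farkas fallback fails for the same reason: this infeasibility is not witnessed by any small cut, since the graph is pairwise well-connected.

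The missing idea is that, for such graphs, the cycles must be chosen using the actual edge-pairing of $U$, which your plan deliberately discards. Every family of bijections $\delta(v^{-1})\to\delta(v)$ arises from some list with the same Whitehead graph. Any pair at $v^{-1}$ can be sent to any pair at $v$ by such a bijection. Hence a uniform list that is balanced for all edge-pairings of $G$ at once is precisely a pair-uniform list, and for $(1,0,2,2)$ there is none. For the particular pairing coming from $abaBAbABa$, one can check that a uniform balanced list does exist, consistent with the theorem. By the above, though, it cannot have constant pair counts at every vertex.

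The obstruction disappears when $p=q$: then $G$ is regular, which is the setting of \cite[Theorem 12]{kim2010} (cf.\ \cref{imp_remark}). The non-regular case with $\max(x,y)\ge 2$ needs a genuinely pairing-dependent construction. The paper itself does not reprove this statement; it quotes it from \cite[Theorem 24]{kim2010}.
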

Now \cref{main_thm} is an immediate consequence of the following.
\cor\label{thm2.1.1}
If $G$ is a connected and pairwise well-connected Whitehead graph with $2n$ vertices and at most $2n + 4$ edges, then $G$ admits a uniform balanced list of cycles.
\ec
\begin{proof}
Apply \cref{inductive_process} for $k=3$, we have that \cref{stronger} is true for all Whitehead graphs having $2n$ vertices and at most $2n+3$ edges if \cref{stronger} is true for all Whitehead graphs having $4$ vertices and at most $7$ edges. By \cref{thm24:kim2010}, \cref{stronger} is true for all Whitehead graphs having $4$ vertices. By \cref{inductive_process} again, we see that proving \cref{6_10_case} is enough to ensure that \cref{stronger} is true for all Whitehead graphs having $2n$ vertices and at most $2n + 4$ edges. 
\end{proof}
\section{Further findings for the six-vertex case}\label{discussion}
\subsection{Tiling Conjecture for simple graphs on six vertices}
We can repeat the idea of proving \cref{6_10_case} to verify \cref{stronger} for simple Whitehead graphs on six vertices. In detail, we prove that simple Whitehead graphs on six vertices admit lists of cycles that are uniform and balanced with respect to any possible edge pairing.
\begin{proposition}\label{stronger_for_simple_graphs}
    Suppose that $G = (V, E)$ is a simple Whitehead graphs having $6$ vertices, with pair of vertices $(a_1, A_1), (a_2, A_2), (a_3, A_3)$. Then there exists a list $\mathcal{C}$ of cycles of $G$, and $c_0, c_1, c_2, c_3\in \mathbb{N}$ such that\begin{itemize}
        \item[(i)] Each edge of $G$ appears in exactly $c_0$ cycles in $\mathcal{C}$.
        \item[(ii)] For all $i\in \{1, 2, 3\}$, if $e$ and $f$ are two distinct edges such that $\{e, f\}\subseteq \delta(a_i)$ or $\{e, f\}\subseteq \delta(A_i)$, there are exactly $c_i$ cycles in $\mathcal{C}$ containing both $e$ and $f$. 
    \end{itemize}
\end{proposition}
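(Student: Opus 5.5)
The plan is to reduce the statement to a finite enumeration, handled exactly as in the proof of \cref{6_10_case}. Condition (ii) is stronger than balancedness with respect to one edge-pairing. It asks that, at each vertex $a_i$ or $A_i$, every pair of distinct edges lies in the same number $c_i$ of cycles, independently of which pair. Then for any bijection $\phi_{a_i}:\delta(a_i)\to\delta(A_i)$, the pair $(e,f)$ and the pair $(e^{A_i},f^{A_i})$ both lie in exactly $c_i$ cycles. So the list is balanced for every possible edge-pairing, and it is uniform by (i). The statement therefore does not depend on edge-pairings at all, and by \cref{single} (applied whenever $G$ comes from a minimal diskbusting list) we may again identify $G$ with the Whitehead graph of a single word. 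Note that (ii) forces $\deg(a_i)=\deg(A_i)$, since both endpoints of a vertex pair must see the same local structure; this is automatic for a Whitehead graph.

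Next I would enumerate all simple graphs on the labelled vertex set $\{a_1,A_1,a_2,A_2,a_3,A_3\}$ with $\deg(a_i)=\deg(A_i)$, up to relabelling symmetries. These symmetries are permutations of the three pairs and swaps $a_i\leftrightarrow A_i$, a group of order $48$. A simple graph on six vertices has at most $15$ edges, so the list is short. Where needed, I would restrict to graphs that are connected and pairwise well-connected; this is the case relevant to \cref{stronger}, and disconnected graphs or graphs with a cut separating $a_i$ from $A_i$ should be discarded or handled trivially.

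For each remaining graph, I would enumerate all cycles of $G$, finitely many and each of length between $3$ and $6$ since $G$ is simple. I would form the incidence matrix $M$ (cycles $\times$ edges) and the matrix $N$ (cycles $\times$ pairs of adjacent edges sharing a vertex). The task is then an integer linear feasibility problem: find a nonzero vector $v\ge 0$ with integer entries such that $vM$ is constant, and $vN$ is constant on the block of pairs at $a_i$ together with the pairs at $A_i$, for each $i$. Every cycle has length at least three, so condition (i) of balancedness is automatic. I would solve the rational LP, for instance maximizing the minimum entry of $v$ subject to these linear equalities, and then clear denominators.

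The main obstacle is that existence is not guaranteed a priori, and some graph may make the system infeasible. For such candidates I would first try symmetric averaging. Summing $v$ over the automorphism group of $G$ preserving the vertex-pair structure preserves all the constraints. This suggests starting from orbit sums of cycles, which reduces the number of unknowns. For highly symmetric cases such as $K_6$, $K_{3,3}$ or the octahedron, a single orbit of cycles (for example all Hamiltonian cycles, or all triangles) should already satisfy (i) and (ii) by transitivity on edges and on pairs of edges at a vertex. The computer search then handles the irregular cases, and any infeasible instance would have to be recorded as an exception.
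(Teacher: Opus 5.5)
Your method is the paper's. Both use that (ii) makes the edge-pairing irrelevant, represent each graph by a single word via \cref{single}, and certify existence by a nonnegative integer solution of the system built from the cycle--edge matrix $M$ and the cycle--(adjacent pair) matrix $N$. The difference is how much is left to the computer.

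The paper prunes heavily before computing:
\begin{itemize}
\item[(a)] $K_6$ is handled by the list of all triangles;
\item[(b)] graphs with at most $10$ edges are taken from \cref{6_10_case} and \cref{inductive_process};
\item[(c)] the $4$-regular case is treated as known;
\item[(d)] a short hand classification by degree sequence and complement graph reduces everything else to the ten graphs $w_{20},\dots,w_{29}$.
\end{itemize}
Your exhaustive search over all labelled simple graphs, up to the group of order $48$, costs more computation. In exchange it is self-contained and certifies the pairing-free condition (ii) directly in every case. This matters for graphs with a valence-two pair. There, \cref{inductive_process} only transports uniform balancedness from the four-vertex base \cref{thm24:kim2010}, which concerns a fixed edge-pairing. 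The Remark after \cref{simple_graphs} shows that (ii) can fail on four-vertex multigraphs. Your restriction to connected, pairwise well-connected graphs is necessary, since a bridge forces $\mathcal{C}$ to be empty; the paper also assumes it tacitly.

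There are three points to fix.
\begin{itemize}
\item[(1)] Your symmetry shortcut fails for the octahedron ($K_6$ minus a perfect matching). A vertex stabiliser is not transitive on pairs of incident edges, because the two neighbours may be antipodal or adjacent. All triangles give counts $1$ and $0$ on these two kinds of pairs, and all Hamiltonian cycles give $3$ and $2$, so no single orbit works. A valid list is two copies of each triangle together with each $4$-cycle: both counts are $3$, and every edge lies in $9$ cycles.
\item[(2)] More importantly, the plan becomes a proof only once every instance is shown feasible. Your closing hedge about recording exceptions has to be replaced by explicit certificates, which is exactly what the paper's ancillary files supply.
\item[(3)] Normalise the LP, e.g.\ $\sum_j v_j=1$. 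Maximising $\min_j v_j$ over a homogeneous system is either unbounded or has optimal value $0$, and that value is attained by $v=0$.
\end{itemize}
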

\begin{proof}
    A simple graph $G = (V,E)$ on six vertices has at most $\binom{6}{2} = 15$ edges. If it is the complete graph, we can choose the list $\mathcal{C}$ consisting of all triangles. If $G$ has no more than ten edges, it follows from \cref{6_10_case} and \cref{inductive_process} that the claim holds. Therefore, we only need to consider the case $11 \le |E| \le 14$. From now on, we will rename the vertices $a_1, A_1, a_2, A_2, a_3, A_3$ by $a, A, b, B, c, C$, respectively. As in the proof of \cref{6_10_case}, because the statement of \cref{stronger_for_simple_graphs} does not depend on edge-pairings, we identify each of the following Whitehead graphs with a Whitehead graph of a single word, and then find a list of cycles satisfying the desired conditions. The list of cycles corresponding to each word is explicitly given in the ancillary files.\smallbreak
    If $|E| = 14$, assume without loss of generality that $\operatorname{deg}(a) = \operatorname{deg}(A) = 4$, while the other vertices are of valence $5$, thus $a$ and $A$ are not adjacent, and $G$ corresponds to the Whitehead graph of $w_{20}='caCaBabacbCbbc'$. 
    If $|E| = 13$, there are four vertices of valence four, namely $a, b, A, B$, which yields two cases.
\begin{itemize}
\item[(i)] $a$ is adjacent to $A$, and $b$ is adjacent to $B$. Then $G$ corresponds to the Whitehead graph of $w_{21}='ccaCaabacbCbb'$.
\item[(ii)] $a$ is adjacent to $b$, and $A$ is adjacent to $B$. Then $G$ corresponds to the Whitehead graph of $w_{22}='caCaBabaccbCb'$. 
\end{itemize}If $|E| = 12$, either the graph is $4$-regular or the graph has exactly two vertices of valence three, two of valence four and two of valence five. We only need to check the second case, where the Whitehead graph is shown in the diagram below. In the following figures, vertices of the same color represent pairs of generators that are inverses of each other. 
\begin{figure}[H]
    \centering
    \begin{tikzpicture}[
    dot/.style={circle, draw, fill=#1, inner sep=1.5pt, minimum size=4pt},
    dot/.default=blue,
    >=latex 
]

    \node[dot=blue] (a1) at (0,0)  {};
    \node[dot=blue] (a2) at (0,1.5) {};
    \node[dot=red] (b1) at (2,0.2)  {};
    \node[dot=red] (b2) at (2,1.7) {};
    \node[dot=white] (c1) at (4, 1.5)  {};
    \node[dot=white] (c2) at (4,-0.2) {};
    
    \draw (a1) -- (a2);
    \draw (a1) -- (b1);
    \draw (a2) -- (b2);
    \draw (a1) -- (c1);
    \draw (a2) -- (c2);
    \draw (b1) -- (c2);
    \draw (c1) -- (c2);
    \draw (b2) -- (c1);
    \draw (a2) -- (c1);
    \draw (a1) -- (c2);
    \draw (b2) -- (c2);
    \draw (b1) -- (c1);

\end{tikzpicture}
    \caption{The non-regular simple Whitehead graph with $6$ vertices and $12$ edges.}
    \label{fig3}
\end{figure}
In this case, the graph $G$ corresponds to the Whitehead graph of $w_{23}='ccaCaBaacbCb'$.
Lastly, if $|E| = 11$, the valences of $a, b, c$ are $3, 3, 5$ or $3, 4, 4$, in any order. If these are $3, 3, 5$, there are two possible cases for the complement graph of $G$. 
\begin{figure}[H]
    \centering
    \begin{tikzpicture}[
    box/.style={draw, minimum width=2.5cm, minimum height=2.5cm, anchor=south west},
    dot/.style={circle, draw, fill=#1, inner sep=1.2pt, minimum size=5pt},
    dot/.default=blue
]

    \node[box] (b1) at (0,0) {};
    \node[dot=blue] (a1) at (0.5, 2.0) {}; \node[dot=blue] (a2) at (0.5, 0.5) {};
    \node[dot=red]  (a3) at (1.3, 1.7) {}; \node[dot=red]  (a4) at (1.3, 0.8) {};
    \node[dot=white] (a5) at (2.1, 2.0) {}; \node[dot=white] (a6) at (2.1, 0.5) {};
    \draw (a1) -- (a2); 
    \draw (a2) -- (a4); 
    \draw (a4) -- (a3); 
    \draw (a3) -- (a1); 

    \begin{scope}[xshift=3cm]
        \node[box] (b2) at (0,0) {};
        \node[dot=blue] (c1) at (0.5, 2.0) {}; \node[dot=blue] (c2) at (0.5, 0.5) {};
        \node[dot=red]  (c3) at (1.3, 1.7) {}; \node[dot=red]  (c4) at (1.3, 0.8) {};
        \node[dot=white] (c5) at (2.1, 2.0) {}; \node[dot=white] (c6) at (2.1, 0.5) {};
        \draw (c1) -- (c3); 
        \draw (c3) -- (c2); 
        \draw (c2) -- (c4); 
        \draw (c4) -- (c1);
    \end{scope}
\end{tikzpicture}
    \caption{Two cases for the complement graph of $G$, if the valences of $a, b, c$ are $3, 3, 5$.}
    \label{fig4}
\end{figure}
The case on the left corresponds to the word $w_{24}='ccbCbcaCaba'$.
The case on the right corresponds to the word $w_{25}='ccaCaacbCbb'$.
If the valences of $a, b, c$ are $3, 4, 4$, there are four possible cases.
\begin{figure}[H]
    \centering
    \begin{tikzpicture}[
    box/.style={draw, minimum width=2.5cm, minimum height=2.5cm, anchor=south west},
    dot/.style={circle, draw, fill=#1, inner sep=1.2pt, minimum size=5pt},
    dot/.default=blue
]

    \node[box] (b1) at (0,0) {};
    \node[dot=blue] (a1) at (0.5, 2.0) {}; \node[dot=blue] (a2) at (0.5, 0.5) {};
    \node[dot=red]  (a3) at (1.3, 1.7) {}; \node[dot=red]  (a4) at (1.3, 0.8) {};
    \node[dot=white] (a5) at (2.1, 2.0) {}; \node[dot=white] (a6) at (2.1, 0.5) {};
    \draw (a1) -- (a2); 
    \draw (a2) -- (a6); 
    \draw (a1) -- (a4); 
    \draw (a3) -- (a5); 

    \begin{scope}[xshift=3cm]
        \node[box] (b2) at (0,0) {};
        \node[dot=blue] (c1) at (0.5, 2.0) {}; \node[dot=blue] (c2) at (0.5, 0.5) {};
        \node[dot=red]  (c3) at (1.3, 1.7) {}; \node[dot=red]  (c4) at (1.3, 0.8) {};
        \node[dot=white] (c5) at (2.1, 2.0) {}; \node[dot=white] (c6) at (2.1, 0.5) {};
        \draw (c1) -- (c2) (c1) -- (c3) (c2) -- (c4) (c5) -- (c6);
    \end{scope}

    \begin{scope}[xshift=6cm]
        \node[box] (b3) at (0,0) {};
        \node[dot=blue] (d1) at (0.5, 2.0) {}; \node[dot=blue] (d2) at (0.5, 0.5) {};
        \node[dot=red]  (d3) at (1.3, 1.7) {}; \node[dot=red]  (d4) at (1.3, 0.8) {};
        \node[dot=white] (d5) at (2.1, 2.0) {}; \node[dot=white] (d6) at (2.1, 0.5) {};
        \draw (d4) -- (d1); 
        \draw (d1) -- (d3); 
        \draw (d5) -- (d2); 
        \draw (d2) -- (d6); 
    \end{scope}
      \begin{scope}[xshift=9cm]
        \node[box] (b3) at (0,0) {};
        \node[dot=blue] (d1) at (0.5, 2.0) {}; \node[dot=blue] (d2) at (0.5, 0.5) {};
        \node[dot=red]  (d3) at (1.3, 1.7) {}; \node[dot=red]  (d4) at (1.3, 0.8) {};
        \node[dot=white] (d5) at (2.1, 2.0) {}; \node[dot=white] (d6) at (2.1, 0.5) {};
        \draw (d3) -- (d1) -- (d5); 
        \draw (d4) -- (d2) -- (d6);
    \end{scope}
\end{tikzpicture}
    \caption{Four cases for the complement graph of $G$, if the valences of $a, b, c$ are $3, 4, 4$.}
    \label{fig5}
\end{figure}
From left to right, the graph $G$ corresponds to the words $w_{26}= {'cbbcaBaccAB'}$, $w_{27}= {'bbcbCbacaCa'}$, $w_{28}=  {'ccbCbbcABaa'}$, and $w_{29}= {'ccaabacbCbb'}$, respectively. 
\end{proof}
The following corollary is immediate from \cref{stronger_for_simple_graphs}.
\cor\label{simple_graphs}
    Let $G$ be a Whitehead graph having $6$ vertices, assume that $G$ has no parallel edges. Then $G$ admits a uniform balanced list of cycles. 
\ec
\begin{note}\upshape
     \cref{stronger_for_simple_graphs} does not hold for an arbitrary connected, pairwise well-connected Whitehead graph. We provide a counterexample of a connected, pairwise well-connected Whitehead graph $G$ with parallel edges. $G$ has four vertices labeled $a, A, b, B$, and the edge multiset of $G$ is
    \begin{equation*}
        \{(a, A), (a, b) = e_0, (a, b) = e_1, (a, B) = f_0, (a, B)= f_1,(A, b) = k_0, (A, b) = k_1,(A, B) = l_0, (A, B) = l_1\}.  
    \end{equation*}
    \begin{figure}[H]
        \centering
    \begin{tikzpicture}[
    dot/.style={circle, draw, fill=#1, inner sep=1.5pt, minimum size=4pt},
    dot/.default=red,
    >=latex 
]
\node[] at (0, 2.5) {$e_0$};
\draw [black] plot [smooth] coordinates {(0, 0) (0.3, 2.5) (1.5, 4)};
\draw [black] plot [smooth] coordinates {(0, 0) (1.5, 4)};
\node[] at (1.3, 2.5) {$e_1$};
\node[] at (3, 2.5) {$f_1$};
\node[] at (3.3, 3.2) {$f_0$};
\node[] at (1.3, 1) {$k_0$};
\node[] at (1.5, 0.3) {$k_1$};
\node[] at (7, 2.5) {$l_0$};
\node[] at (8.3, 2.5) {$l_1$};
\draw [black] plot [smooth] coordinates {(0, 0) (7, 4)};
\draw [black] plot [smooth] coordinates {(8, 0) (1.5, 4)};
\draw [black] plot [smooth] coordinates {(7, 4) (1.5, 4)};
\draw [black] plot [smooth] coordinates {(7, 4) (8, 0)};
\draw [black] plot [smooth] coordinates {(0, 0) (3.5, 1.5) (7, 4)};
\draw [black] plot [smooth] coordinates {(8, 0) (4, 2) (1.5, 4)};
\draw [black] plot [smooth] coordinates {(7, 4) (8, 2.5) (8, 0)};
\node[dot = green] at (0, 0) [label = below:$b$]{};
\node[dot = green] at (8, 0) [label = below:$B$]{};
\node[dot = red] at (1.5, 4) [label = above:$a$]{};
\node[dot = red] at (7, 4) [label = above:$A$]{};
    \end{tikzpicture}
        \caption{A visualization of the graph $G$.}
        \label{fig6}
    \end{figure}
     Assume that $\mathcal{C}$ is a list of cycles of $G$, such that there are $c_0, c_1, c_2\in \mathbb{N}$ satisfying the following conditions:
    \begin{enumerate}
        \item Each edge appears in exactly $c_0$ cycles in $\mathcal{C}$.
        \item For each pair of edges $\{e, f\}\subseteq \delta(a)$ or $\{e, f\}\subseteq\delta(A)$, there are exactly $c_1$ cycles in $\mathcal{C}$ containing both $e$ and $f$.
        \item For each pair of edges $\{e, f\}\subseteq \delta(b)$ or $\{e, f\}\subseteq\delta(B)$, there are exactly $c_2$ cycles in $\mathcal{C}$ containing both $e$ and $f$.
    \end{enumerate}
    Observe that $\{e_0, e_1\}\subseteq \delta(a)\cap \delta(b)$. Thus $c_1$ and $c_2$ are both equal to the number of cycles containing both $e_0$ and $e_1$, in other words, $c_1=c_2 = c$. Furthermore, $\{e_0, e_1\}$ is a simple cycle, hence there are exactly $c$ copies of $\{e_0, e_1\}$ in $\mathcal{C}$. Consequently, $\mathcal{C}$ contains exactly $c$ copies of each bigon in $G$.\smallbreak
    Next, suppose that the cycle $\{e_0, k_1, aA\}$ appears $g_{0,1}$ times in $\mathcal{C}$, $\{e_1, k_1, aA\}$ appears $g_{1,1}$ times in $\mathcal{C}$, $\{e_0, k_0, aA\}$ appears $g_{0,0}$ times in $\mathcal{C}$, $\{e_1, k_0, aA\}$ appears $g_{1,0}$ times in $\mathcal{C}$. Since $\{e_0, k_1, aA\}$ and $\{e_1, k_1, aA\}$ are the only two cycles in $G$ containing both $aA$ and $k_1$, the number of cycles containing both $aA$ and $k_1$ is $(g_{0,1} + g_{1,1})$, or
    \begin{equation*}
        g_{0,1} + g_{1,1} = c.\tag{1}\label{eq:check_counterexample}
    \end{equation*}
    Similarly, we have $g_{0,0} + g_{1,0}=c$. Furthermore, $\{e_0, k_1, aA\}$ and $\{e_0, k_0, aA\}$ both contain the pair $\{e_0, aA\}$, thus $g_{0,1} + g_{0,0}$ is less than or equal to the number of cycles in $\mathcal{C}$ that pass through both $e_0$ and $aA$, which implies that $g_{0,1} + g_{0,0}\leq c$. Similarly, $g_{1,1} + g_{1,0}\leq c$. But from \eqref{eq:check_counterexample}, we have $g_{0,1} + g_{0,0} + g_{1,1} + g_{1,0} = 2c$, therefore, $g_{0,1} = g_{1,0}$ and $g_{0,0} = g_{1,1}$. Note that all the cycles containing $aA$ are triangles, consequently, by the above argument, the number of cycles containing $aA$ in $\mathcal{C}$ is $2\cdot 2c = 4c$, hence $c>0$. \smallbreak
    For each $(x, y, z, t)\in \{0, 1\}^4$, suppose that $g_{x, y, z, t}$ is the number of copies of $\{e_x, f_y, k_z, l_t\}$ in $\mathcal{C}$. Then, we have the following equations
    \begin{align*}
        g_{x, y, 0, 0} + g_{x, y, 1, 0} + g_{x, y, 0, 1} + g_{x, y, 1, 1} &= c\quad \forall x, y\in \{0, 1\}\\
        g_{x, 0, z, 0} + g_{x, 0, z, 1} + g_{x, 1, z, 0} + g_{x, 1, z, 1} &= c - g_{x, z}\quad \forall x, z\in \{0, 1\}.
    \end{align*}
    The first group of equations follows from considering the number of cycles that contain both $e_x$ and $f_y$, for each $(x, y)\in \{0, 1\}^2$, the second group of equations follows from considering the number of cycles that contain both $e_x$ and $k_z$, for each $(x, z)\in \{0, 1\}^2$ (for each such $(x, z)$, there are $c/2$ cycles of the form $\{e_x,k_z, aA\}$ already in $\mathcal{C}$). Summing up the first group of equations, we get
    \begin{equation*}
        \sum\limits_{(x, y, z, t)\in \{0, 1\}^4}g_{x, y, z, t} = 4c.
    \end{equation*}
    Summing up the second group of equations, we get
    \begin{equation*}
        \sum\limits_{(x, y, z, t)\in \{0, 1\}^4}g_{x, y, z, t} = 2c.
    \end{equation*}
    We noted earlier that $c>0$, so this is a contradiction.
\end{note}
\subsection{Computational evidences}
We computationally verify \cref{stronger} for six-vertex Whitehead graphs of lists of single word of length eleven. We give pseudocodes describing an algorithm of finding uniform balanced lists of cycles of Whitehead graphs. The main idea is to convert the problem of finding uniform balanced lists of cycles into integer linear programs (ILPs), see \cref{alg:find_uniform_balanced}. After that, we use Gurobi \cite{gurobi} to solve those ILPs. The actual Python codes and the explicit solutions of all cases are written in the ancillary files, with the same format as the lists of cycles written in \cref{main_proof_section}. Additionally, one can verify whether a graph is connected and pairwise well-connected by looking at its incidence matrix and adjoint matrix; in particular, a graph with $e$ edges is connected if and only if the rank of the incidence matrix is $e-1$; and the Preflow-Push algorithm (implemented in Networkx \cite{hagberg2008exploring}) is used to find the maximum flow between two vertices in a pair, from which we can check if a graph is pairwise well-connected.  
\begin{algorithm}[H]
\caption{Find a uniform balanced list of cycles of Whitehead graphs having $2n$ vertices and $l$ edges, for $n> 1$, and $l\in\mathbb{N}$.}\label{alg:find_uniform_balanced}
\begin{algorithmic}[1]
    \Require $n \in \mathbb{N}\setminus \{1\}$ \Comment{Rank of the free group.}
    \Require $l\in \mathbb{N}$ \Comment{Number of edges.}
    \State $S \gets \text{Set of all cyclically reduced words of length $l$ in $F_n$}$
    \State $solution\_dict \gets \text{empty dictionary}$ \Comment{The dictionary of solutions.}
    \State $counterexamples \gets \text{empty list}$ \Comment{The list of counterexamples (if any) of \cref{stronger}.}
    \For{$w \in S$}
    \State $U \gets (w)$ \Comment{The list containing a single word $w$.}
    \State $W(U) \gets \text{Whitehead graph of the list $U$}$
    \If{$W(U)$ is connected and pairwise well-connected}
    \State $M\gets $ The incidence matrix of $W(U)$ 
    \State $A \gets$ Set of all minimal nonzero incidence vectors $v\in \{0,1\}^{l}$ such that $M\cdot v\in \{0,2\}^{l}$ 
    \State\Comment{Each vector in $A$ corresponds to a cycle in $W(U)$.}
    \State $k\gets |A|$\Comment{Suppose that $A = \{v_1, \ldots, v_k\}$, where $\{v_1, \ldots, v_{m}\}$ is the set of cycles of length at least three.}
    \State \textbf{solve} the following integer linear program $(\mathcal{P})$:
    \begin{align*}
        \text{MINIMIZE}\hspace{0.5cm} & \sum\limits_{i=1}^{k}x_i\\
        \text{SUBJECT TO}& \\
        &x_1, \ldots, x_{k+1}\in\mathbb{Z}\\
        &x_i\ge 0, \forall i\in \{1, \ldots, k\}\\
        & x_{k+1} \ge 1\\
        &\sum\limits_{i=1}^{m}x_i \ge 1\tag{1}\\
        &\sum\limits_{i=1}^{k}x_i \cdot v_i = x_{k+1}\cdot (1,\ldots, 1)^T\tag{2}\\
        \text{\textbf{for} all pairs of adjacent edges $e, f\in \delta(v^{-1})$ \textbf{do}}\\
        & A(e, f) = \text{Set of all cycles in $A$ containing $e$ and $f$}\\
        & A(e^v, f^v) = \text{Set of all cycles in $A$ containing $e^{v}$ and $f^v$}\\
        & \sum\limits_{\substack{i\in\{1, \ldots, k\}\\v_i \in A(e,f)}}x_i\cdot v_i = \sum\limits_{\substack{i\in\{1, \ldots, k\}\\v_i \in A(e^v,f^v)}}x_i\cdot v_i\tag{3}\\
        \text{\textbf{end for}}&
    \end{align*}
    \State \textbf{end solve}
    \State \Comment{Explanation: $x_1, \ldots, x_k$ are number of occurrences of $v_1, \ldots, v_k$.}
    \State \Comment{Inequality (1) means that at least one cycle of length at least three is used.}
    \State \Comment{Equality (2) means that each edge appears in $x_{k+1}$ cycles.}
    \State\Comment{Equality (3) means that the number of cycles containing both $e$ and $f$ is equal to the number of cycles containing both $e^v$ and $f^v$.}
    \If{$(\mathcal{P})$ has an optimal solution $(x_1, \ldots, x_{k+1})$}
    \State $\mathcal{C}\gets$ For each $i\in \{1, \ldots, k\}$, $v_i$ occurs $x_i$ times \Comment{$\mathcal{C}$ is the uniform balanced list of cycles.}
    \State $solution\_dict[w] = \mathcal{C}$
    \Else
    \State \textbf{add} $w$ \textbf{to} $counterexamples$
    \EndIf
    \Else
    \State \textbf{continue}
    \EndIf
    \EndFor
    \State \Return $solution\_dict$, $counterexamples$ \Comment{\cref{stronger} is false if $counterexamples$ is non-empty.}
\end{algorithmic}
\end{algorithm}
\appendix
\section{An explicit surface subgroup}
Consider the word $w_0 = acAbAcabCb$, which corresponds to Case 1a in the proof of \cref{6_10_case}. We provide an explicit surface subgroup of the Baumslag double $D(U)$, where $U$ is the list containing only $w_0$. The group $D(U)$ has a presentation
\begin{equation*}
    D(U) = \langle a, b, c, d, e, f \mid aca^{-1}ba^{-1}cabc^{-1}b=dfd^{-1}ed^{-1}fdef^{-1}e\rangle.
\end{equation*}
Following the proof of \cite[Lemma 10]{kim2010}, the process starts by describing a $U$-polygonal surface from a balanced list of cycles in $W(U)$. See \cref{fig7} for an illustration of $W(U)$.
\begin{figure}[H]
        \centering
\begin{tikzpicture}[
    every node/.style={
        circle,
        draw,
        fill=white,
        minimum size=4pt,
        inner sep=1.0pt
    },
    line width=0.6pt
]

\node (A) at (0,0) {};
\node (B) at (3,0) {};
\node (C) at (7,0) {};

\node (a) at (0,4) {};
\node (b) at (3.2,4.2) {};
\node (c) at (7,4.2) {};

\node[draw=none, fill=none, above] at (a) {$a$};
\node[draw=none,fill=none,above] at (b) {$b$};
\node[draw=none,fill=none,above] at (c) {$c$};
\node[draw=none,fill=none,below] at (A) {$A$};
\node[draw=none,fill=none,below] at (B) {$B$};
\node[draw=none,fill=none,below] at (C) {$C$};

\draw (a) to[bend left=10] (b);
\draw (b) to[bend left=10] (c);
\draw (a) to[bend right=10] (c);

\draw (A) to[bend right=10] (B);
\draw (B) to[bend right=10] (C);
\draw (A) to[bend left=10] (C);

\draw (a) to[bend right=18] (B);

\draw (a) to[bend right=8] (C);

\draw (b) to[bend left=25] (A);


\draw (c) to[bend left=20] (A);

\end{tikzpicture}
\caption{The Whitehead graph of $U = [w_0]$.}
        \label{fig7}
    \end{figure}
By \eqref{eq:case1a}, a balanced list of cycles in $W(U)$ is $\mathcal{C} = [C_1, C_2, C_3, C_4, C_5, C_6, C_7, C_8, C_9, C_{10}, C_{11}, C_{12}]$, where 
\begin{align*}
 &C_1 = a\to b\to c\to A\to C\to B\to a,   \\
 &C_2 = a\to c\to A\to C\to B\to a,\\
 &C_3 = C_4 = a\to B\to A\to C\to a,\\
 &C_5=C_6 = a\to C\to B\to A\to c\to b\to a,\\
 &C_7 = b\to A\to C\to B\to a\to b,\\
 &C_8 = C_9 = b\to A\to c\to a\to b,\\
 &C_{10} = b\to A\to B\to a\to c\to b,\\
 &C_{11} = b\to A\to C\to a\to c\to b,\\
 &C_{12} = b\to A\to B\to C\to a\to c\to b.
\end{align*}
We choose a total order $\prec$ on $\{(v, e)\in V(W(U))\times E(W(U))\mid e\in \delta(v)\}$ as follows.
\begin{align*}
    (a, aC)\prec (a, ac)\prec (a, ab)\prec (a, aB)\prec \\
    (A, Ab)\prec (A, AB)\prec (A, AC)\prec (A, Ac)\prec\\
    (b, bA)\prec (b, bc)\prec (b, ba)\prec\\
    (B, BC)\prec (B, Ba)\prec (B, BA)\prec\\
    (c, ca)\prec (c, cA)\prec (c, cb)\prec\\
    (C, Ca)\prec (C, CA)\prec (C, CB).
\end{align*}
Next, for $i\in \{1, \ldots, 12\}$, let $V_i$ be a polygonal disk such that $\partial V_i$ and $C_i$ are cycles having the same length, such that the vertices of $V_i$ are labeled by the edges of $C_i$, and there is a directed edge from vertex labeled $f$ to vertex labeled $e$ if $e$ and $f$ are adjacent to a vertex $x$ in $C_i$, and $(x, e)\prec (x, f)$. We also label such a directed edge $\Vec{fe}$ by $(x, \{e, f\})$. 
\setcounter{equation}{0}
Now, define an edge-pairing $\sim_0$ on $V_{1}, \ldots, V_{12}$ as follows. See \cref{fig8} for an illustration of the cycles $V_1, \ldots, V_{12}$ and the edge-pairing.\\
\begin{align}
    (a, \{ab, aB\})\in E(\partial V_{1})\sim_0 (A, \{AC, Ac\})\in E(\partial V_{2}),\\
    (a, \{ac, aB\})\in E(\partial V_{2})\sim_0 (A, \{AB, Ac\})\in E(\partial V_{5}),\\ 
    (a, \{aC, aB\})\in E(\partial V_{3})\sim_0 (A, \{Ab, Ac\})\in E(\partial V_{8}),\\ 
    (a, \{aC, aB\})\in E(\partial V_{4})\sim_0 (A, \{Ab, Ac\})\in E(\partial V_{9}),\\ 
    (a, \{aC, ab\})\in E(\partial V_{5})\sim_0 (A, \{Ab, AC\})\in E(\partial V_{7}),\\ 
    (a, \{aC, ab\})\in E(\partial V_{6})\sim_0 (A, \{Ab, AC\})\in E(\partial V_{11}),\\
    (a, \{ab, aB\})\in E(\partial V_{7})\sim_0 (A, \{AC, Ac\})\in E(\partial V_{1}),\\ 
    (a, \{ac, ab\})\in E(\partial V_{8})\sim_0 (A, \{AB, AC\})\in E(\partial V_{3}),\\
    (a, \{ac, ab\})\in E(\partial V_{9})\sim_0 (A, \{AB, AC\})\in E(\partial V_{4}),\\
    (a, \{ac, aB\})\in E(\partial V_{10})\sim_0 (A, \{AB, Ac\})\in E(\partial V_{6}),\\ 
    (a, \{aC, ac\})\in E(\partial V_{11})\sim_0 (A, \{Ab, AB\})\in E(\partial V_{12}),\\ 
    (a, \{aC, ac\})\in E(\partial V_{12})\sim_0 (A, \{Ab, AB\})\in E(\partial V_{10}), \\
    (b, \{bc, ba\})\in E(\partial V_{1})\sim_0 (B, \{Ba, BA\})\in E(\partial V_{10}),\\
    (b, \{bc, ba\})\in E(\partial V_{5})\sim_0 (B, \{Ba, BA\})\in E(\partial V_{3}),\\
    (b, \{bc, ba\})\in E(\partial V_{6})\sim_0 (B, \{Ba, BA\})\in E(\partial V_{4}),\\
    (b, \{bA, ba\})\in E(\partial V_{7})\sim_0 (B, \{BC, BA\})\in E(\partial V_{12}),\\
    (b, \{bA, ba\})\in E(\partial V_{8})\sim_0 (B, \{BC, BA\})\in E(\partial V_{5}),\\
    (b, \{bA, ba\})\in E(\partial V_{9})\sim_0 (B, \{BC, BA\})\in E(\partial V_{6}),\\
    (b, \{bA, bc\})\in E(\partial V_{10})\sim_0 (B, \{BC, Ba\})\in E(\partial V_{1}),\\ 
    (b, \{bA, bc\})\in E(\partial V_{11})\sim_0 (B, \{BC, Ba\})\in E(\partial V_{2}),\\
    (b, \{bA, bc\})\in E(\partial V_{12})\sim_0 (B, \{BC, Ba\})\in E(\partial V_{7}),\\
    (c, \{cA, cb\})\in E(\partial V_{1})\sim_0 (C, \{CA, CB\})\in E(\partial V_{2}),\\
    (c, \{ca, cA\})\in E(\partial V_{2})\sim_0 (C, \{Ca, CA\})\in E(\partial V_{11}),\\ 
    (c, \{cA, cb\})\in E(\partial V_{5})\sim_0 (C, \{CA, CB\})\in E(\partial V_{1}),\\ 
    (c, \{cA, cb\})\in E(\partial V_{6})\sim_0 (C, \{CA, CB\})\in E(\partial V_{7}),\\ 
    (c, \{ca, cA\})\in E(\partial V_{8})\sim_0 (C, \{Ca, CA\})\in E(\partial V_{3}),\\ 
    (c, \{ca, cA\})\in E(\partial V_{9})\sim_0 (C, \{Ca, CA\})\in E(\partial V_{4}),\\ 
    (c, \{ca, cb\})\in E(\partial V_{10})\sim_0 (C, \{Ca, CB\})\in E(\partial V_{12}),\\ 
    (c, \{ca, cb\})\in E(\partial V_{11})\sim_0 (C, \{Ca, CB\})\in E(\partial V_{6}),\\ 
    (c, \{ca, cb\})\in E(\partial V_{12})\sim_0 (C, \{Ca, CB\})\in E(\partial V_{5}).
\end{align}
\begin{figure}[H]
    \centering
    \begin{tikzpicture}[scale=0.6,
    >=Stealth,
    vertex/.style={
        fill=orange!10,
        draw=black!45,
        line width=.6pt
    },
    every node/.style={
        font=\tiny
    }
]

\begin{scope}[shift={(0,0)}]

\coordinate (A) at (0,1.3);
\coordinate (B) at (2,1.3);
\coordinate (C) at (3,0);
\coordinate (D) at (2,-1.3);
\coordinate (E) at (0,-1.3);
\coordinate (F) at (-1,0);

\filldraw[vertex]
(A)--(B)--(C)--(D)--(E)--(F)--cycle;

\draw[->] (A)--(B) node[midway, above]{(24)};
\draw[->] (C)--(B) node[midway, above right]{(7)};
\draw[->] (D)--(C) node[midway, below right]{(22)};
\draw[->] (E)--(D) node[midway, below ]{(13)};
\draw[->] (F)--(E) node[midway, below left]{(1)};
\draw[->] (F)--(A) node[midway,  left]{(19)};

\node[above] at (A) {$CB$};
\node[above] at (B) {$AC$};
\node[right] at (C) {$cA$};
\node[below] at (D) {$bc$};
\node[below] at (E) {$ab$};
\node[left]  at (F) {$Ba$};

\node at (1,-0.1) {$V_1$};

\end{scope}

\begin{scope}[shift={(5.5,0)}]

\coordinate (A) at (1.2,1.3);
\coordinate (B) at (2.5,.35);
\coordinate (C) at (2.1,-1.3);
\coordinate (D) at (0.1,-1.3);
\coordinate (E) at (-.3,.35);

\filldraw[vertex]
(A)--(B)--(C)--(D)--(E)--cycle;

\draw[->] (A)--(B) node[midway, above right]{(23)};
\draw[->] (C)--(B) node[midway, right]{(2)};
\draw[->] (C)--(D) node[midway, below ]{(20)};
\draw[->] (D)--(E) node[midway, below left]{(22)};
\draw[->] (A)--(E) node[midway, above left]{(1)};

\node[above] at (A) {$cA$};
\node[right] at (B) {$ac$};
\node[below] at (C) {$Ba$};
\node[below] at (D) {$CB$};
\node[left]  at (E) {$AC$};

\node at (1.1,-0.1) {$V_2$};

\end{scope}

\begin{scope}[shift={(11,0)}]

\coordinate (A) at (0,1.3);
\coordinate (B) at (2,1.3);
\coordinate (C) at (2,-1.3);
\coordinate (D) at (0,-1.3);

\filldraw[vertex]
(A)--(B)--(C)--(D)--cycle;

\draw[->] (A)--(B) node[midway, above]{(14)};
\draw[->] (B)--(C) node[midway, right]{(3)};
\draw[->] (D)--(C) node[midway, below]{(26)};
\draw[->] (D)--(A) node[midway, left]{(8)};

\node[above] at (A) {$BA$};
\node[above] at (B) {$aB$};
\node[below] at (C) {$Ca$};
\node[below] at (D) {$AC$};

\node at (1,-0.1) {$V_3$};

\end{scope}
\begin{scope}[shift={(16.5,0)}]

\coordinate (A) at (0,1.3);
\coordinate (B) at (2,1.3);
\coordinate (C) at (2,-1.3);
\coordinate (D) at (0,-1.3);

\filldraw[vertex]
(A)--(B)--(C)--(D)--cycle;

\draw[->] (A)--(B) node[midway, above]{(15)};
\draw[->] (B)--(C) node[midway, right]{(4)};
\draw[->] (D)--(C) node[midway, below]{(27)};
\draw[->] (D)--(A) node[midway, left]{(9)};

\node[above] at (A) {$BA$};
\node[above] at (B) {$aB$};
\node[below] at (C) {$Ca$};
\node[below] at (D) {$AC$};

\node at (1,-0.1) {$V_4$};

\end{scope}

\begin{scope}[shift={(0,-5)}]

\coordinate (A) at (0,1.3);
\coordinate (B) at (2,1.3);
\coordinate (C) at (3,0);
\coordinate (D) at (2,-1.3);
\coordinate (E) at (0,-1.3);
\coordinate (F) at (-1,0);

\filldraw[vertex]
(A)--(B)--(C)--(D)--(E)--(F)--cycle;

\draw[->] (A)--(B) node[midway, above]{(30)};
\draw[->] (C)--(B) node[midway, above right]{(5)};
\draw[->] (C)--(D) node[midway, below right]{(14)};
\draw[->] (D)--(E) node[midway, below ]{(24)};
\draw[->] (E)--(F) node[midway, below left]{(2)} node[midway, below left]{(2)};
\draw[->] (F)--(A) node[midway, above left]{(17)};

\node[above] at (A) {$CB$};
\node[above] at (B) {$aC$};
\node[below] at (C) {\hspace{1em}$ba$};
\node[below] at (D) {$cb$};
\node[below] at (E) {$Ac$};
\node[left]  at (F) {$BA$};

\node at (1,-0.1) {$V_5$};

\end{scope}
\begin{scope}[shift={(5.5,-5)}]

\coordinate (A) at (0,1.3);
\coordinate (B) at (2,1.3);
\coordinate (C) at (3,0);
\coordinate (D) at (2,-1.3);
\coordinate (E) at (0,-1.3);
\coordinate (F) at (-1,0);

\filldraw[vertex]
(A)--(B)--(C)--(D)--(E)--(F)--cycle;

\draw[->] (A)--(B) node[midway, above]{(29)};
\draw[->] (C)--(B) node[midway, above right]{(6)};
\draw[->] (C)--(D) node[midway, below right]{(15)};
\draw[->] (D)--(E) node[midway, below ]{(25)};
\draw[->] (E)--(F) node[midway, below left]{(10)};
\draw[->] (F)--(A) node[midway, above left]{(18)};

\node[above] at (A) {$CB$};
\node[above] at (B) {$aC$};
\node[right] at (C) {$ba$};
\node[below] at (D) {$cb$};
\node[below] at (E) {$Ac$};
\node[left]  at (F) {$BA$};

\node at (1,-0.1) {$V_6$};

\end{scope}

\begin{scope}[shift={(11,-5)}]

\coordinate (A) at (1.2,1.3);
\coordinate (B) at (2.5,.35);
\coordinate (C) at (2.1,-1.3);
\coordinate (D) at (0.1,-1.3);
\coordinate (E) at (-.3,.35);

\filldraw[vertex]
(A)--(B)--(C)--(D)--(E)--cycle;

\draw[->] (A)--(B) node[midway, above right]{(5)};
\draw[->] (C)--(B) node[midway, below right]{(16)};
\draw[->] (D)--(C) node[midway, below ]{(7)};
\draw[->] (D)--(E) node[midway, left]{(21)};
\draw[->] (E)--(A) node[midway, above left]{(25)};

\node[above] at (A) {$AC$};
\node[right] at (B) {$bA$};
\node[below] at (C) {$ab$};
\node[below] at (D) {$Ba$};
\node[left]  at (E) {$CB$};

\node at (1.1,-0.1) {$V_7$};

\end{scope}

\begin{scope}[shift={(16.5,-5)}]

\coordinate (A) at (0,1.3);
\coordinate (B) at (2,1.3);
\coordinate (C) at (2,-1.3);
\coordinate (D) at (0,-1.3);

\filldraw[vertex]
(A)--(B)--(C)--(D)--cycle;

\draw[->] (B)--(A) node[midway, above]{(26)};
\draw[->] (D)--(A) node[midway,  left]{(8)};
\draw[->] (D)--(C) node[midway, below]{(17)};
\draw[->] (B)--(C) node[midway, right]{(3)};

\node[above] at (A) {$ca$};
\node[above] at (B) {$Ac$};
\node[below] at (C) {$bA$};
\node[below] at (D) {$ab$};

\node at (1,-0.1) {$V_8$};

\end{scope}

\begin{scope}[shift={(0,-10)}]

\coordinate (A) at (0,1.3);
\coordinate (B) at (2,1.3);
\coordinate (C) at (2,-1.3);
\coordinate (D) at (0,-1.3);

\filldraw[vertex]
(A)--(B)--(C)--(D)--cycle;

\draw[->] (B)--(A) node[midway, above]{(27)};
\draw[->] (D)--(A) node[midway,  left]{(9)};
\draw[->] (D)--(C) node[midway, below ]{(18)};
\draw[->] (B)--(C) node[midway, right]{(4)};

\node[above] at (A) {$ca$};
\node[above] at (B) {$Ac$};
\node[below] at (C) {$bA$};
\node[below] at (D) {$ab$};

\node at (1,-0.1) {$V_9$};

\end{scope}

\begin{scope}[shift={(5.5,-10)}]

\coordinate (A) at (1.2,1.3);
\coordinate (B) at (2.5,.35);
\coordinate (C) at (2.1,-1.3);
\coordinate (D) at (0.1,-1.3);
\coordinate (E) at (-.3,.35);

\filldraw[vertex]
(A)--(B)--(C)--(D)--(E)--cycle;

\draw[->] (A)--(B) node[midway, above right]{(12)};
\draw[->] (C)--(B) node[midway, below right]{(19)};
\draw[->] (C)--(D) node[midway, below ]{(28)};
\draw[->] (E)--(D) node[midway, below left]{(10)};
\draw[->] (A)--(E) node[midway, above left]{(13)};

\node[above] at (A) {$AB$};
\node[right] at (B) {$bA$};
\node[below] at (C) {$cb$};
\node[below] at (D) {$ac$};
\node[left]  at (E) {$Ba$};

\node at (1.1,-0.1) {$V_{10}$};

\end{scope}

\begin{scope}[shift={(11,-10)}]

\coordinate (A) at (1.2,1.3);
\coordinate (B) at (2.5,.35);
\coordinate (C) at (2.1,-1.3);
\coordinate (D) at (0.1,-1.3);
\coordinate (E) at (-.3,.35);

\filldraw[vertex]
(A)--(B)--(C)--(D)--(E)--cycle;

\draw[->] (A)--(B) node[midway, above right]{(6)};
\draw[->] (C)--(B) node[midway, below right]{(20)};
\draw[->] (C)--(D) node[midway, below ]{(29)};
\draw[->] (D)--(E) node[midway, below left]{(11)};
\draw[->] (A)--(E) node[midway, above left]{(23)};

\node[above] at (A) {$AC$};
\node[right] at (B) {$bA$};
\node[below] at (C) {$cb$};
\node[below] at (D) {$ac$};
\node[left]  at (E) {$Ca$};

\node at (1.1,-0.1) {$V_{11}$};

\end{scope}

\begin{scope}[shift={(16.5,-10)}]

\coordinate (A) at (0,1.3);
\coordinate (B) at (2,1.3);
\coordinate (C) at (3,0);
\coordinate (D) at (2,-1.3);
\coordinate (E) at (0,-1.3);
\coordinate (F) at (-1,0);

\filldraw[vertex]
(A)--(B)--(C)--(D)--(E)--(F)--cycle;

\draw[->] (A)--(B) node[midway, above]{(11)};
\draw[->] (C)--(B) node[midway, above right]{(21)};
\draw[->] (C)--(D) node[midway, below right]{(30)};
\draw[->] (D)--(E) node[midway, below ]{(12)};
\draw[->] (F)--(E) node[midway, below left]{(28)};
\draw[->] (A)--(F) node[midway, above left]{(16)};

\node[above] at (A) {$AB$};
\node[above] at (B) {$bA$};
\node[right] at (C) {$cb$};
\node[below] at (D) {$ac$};
\node[below] at (E) {$Ca$};
\node[left]  at (F) {$BC$};

\node at (1,-0.1) {$V_{12}$};

\end{scope}
\end{tikzpicture}
    \caption{The cycles $V_1, \ldots, V_{12}$ and the edge-pairing.}
    \label{fig8}
\end{figure}
Let $S = \bigsqcup\limits_{i\in \{1, \ldots, 12\}}V_i/\sim_0$, on which we have a graph 
\begin{equation*}
    \Gamma=\bigsqcup\limits_{i\in \{1, \ldots, 12\}}\partial V_i/\sim_0.
\end{equation*}
Note that in the process of identifying directed edges, we are also identifying the corresponding vertices of the edges in consideration. In particular, consider the vertex labeled $ab$ of $\partial V_1$. 
\begin{enumerate}[1.]
    \item After identification (1), vertex $ab\in V(\partial V_1)$ is glued with $AC\in V(\partial V_2)$.
    \item After identification (22), vertex $AC\in V(\partial V_2)$ is glued with $cA\in V(\partial V_1)$.
    \item After identification (7), vertex $cA\in V(\partial V_1)$ is glued with $aB\in V(\partial V_7)$.
    \item After identification (21), vertex $Ba\in V(\partial V_7)$ is glued with $bc\in V(\partial V_{12})$.
    \item After identification (30), vertex $cb\in V(\partial V_{12})$ is glued with $CB\in V(\partial V_5)$.
    \item After identification (17), vertex $BC\in V(\partial V_5)$ is glued with $bA\in V(\partial V_8)$.
    \item After identification (3), vertex $bA\in V(\partial V_8)$ is glued with $aC\in V(\partial V_3)$.
    \item After identification (26), vertex $aC\in V(\partial V_3)$ is glued with $ca\in V(\partial V_8)$.
    \item After identification (8), vertex $ca\in V(\partial V_8)$ is glued with $AB\in V(\partial V_3)$.
    \item After identification (14), vertex $AB\in V(\partial V_3)$ is glued with $ba\in V(\partial V_5)$.
    \item After identification (5), vertex $ab\in V(\partial V_5)$ is glued with $CA\in V(\partial V_7)$.
    \item After identification (25), vertex $AC\in V(\partial V_7)$ is glued with $cA\in V(\partial V_6)$.
    \item After identification (10), vertex $Ac\in V(\partial V_6)$ is glued with $aB\in V(\partial V_{10})$.
    \item After identification (13), vertex $aB\in V(\partial V_{10})$ is glued with $bc\in V(\partial V_{1})$.
    \item After identification (22), vertex $cb\in V(\partial V_1)$ is glued with $CB\in V(\partial V_2)$.
    \item After identification (20), vertex $BC\in V(\partial V_2)$ is glued with $bA\in V(\partial V_{11})$.
    \item After identification (6), vertex $Ab\in V(\partial V_{11})$ is glued with $aC\in V(\partial V_6)$.
    \item After identification (29), vertex $aC\in V(\partial V_6)$ is glued with $ca\in V(\partial V_{11})$.
    \item After identification (11), vertex $ca\in V(\partial V_{11})$ is glued with $AB\in V(\partial V_{12})$.
    \item After identification (16), vertex $AB\in V(\partial V_{12})$ is glued with $ab\in V(\partial V_7)$.
    \item After identification (7), vertex $ab\in V(\partial V_7)$ is glued with $AC\in V(\partial V_1)$.
    \item After identification (24), vertex $AC\in V(\partial V_1)$ is glued with $cA\in V(\partial V_5)$.
    \item After identification (2), vertex $cA\in V(\partial V_5)$ is glued with $aB\in V(\partial V_2)$.
    \item After identification (20), vertex $aB\in V(\partial V_2)$ is glued with $bc\in V(\partial V_{11})$.
    \item After identification (29), vertex $cb\in V(\partial V_{11})$ is glued with $CB\in V(\partial V_6)$.
    \item After identification (18), vertex $BC\in V(\partial V_6)$ is glued with $bA\in V(\partial V_9)$.
    \item After identification (4), vertex $bA\in V(\partial V_9)$ is glued with $aC\in V(\partial V_4)$.
    \item After identification (27), vertex $aC\in V(\partial V_4)$ is glued with $ca\in V(\partial V_9)$.
    \item After identification (9), vertex $ac\in V(\partial V_9)$ is glued with $AB\in V(\partial V_4)$.
    \item After identification (15), vertex $AB\in V(\partial V_4)$ is glued with $ba\in V(\partial V_6)$.
    \item After identification (6), vertex $ab\in V(\partial V_6)$ is glued with $AC\in V(\partial V_{11})$.
    \item After identification (23), vertex $AC\in V(\partial V_{11})$ is glued with $cA\in V(\partial V_2)$.
    \item After identification (1), vertex $cA\in V(\partial V_2)$ is glued with $aB\in V(\partial V_1)$.
    \item After identification (19), vertex $Ba\in V(\partial V_1)$ is glued with $bc\in V(\partial V_{10})$.
    \item After identification (28), vertex $cb\in V(\partial V_{10})$ is glued with $CB\in V(\partial V_{12})$.
    \item After identification (16), vertex $BC\in V(\partial V_{12})$ is glued with $bA\in V(\partial V_7)$.
    \item After identification (5), vertex $bA\in V(\partial V_7)$ is glued with $aC\in V(\partial V_5)$.
    \item After identification (30), vertex $aC\in V(\partial V_5)$ is glued with $ca\in V(\partial V_{12})$.
    \item After identification (12), vertex $ca\in V(\partial V_{12})$ is glued with $AB\in V(\partial V_{10})$.
    \item After identification (13), vertex $AB\in V(\partial V_{10})$ is glued with $ba\in V(\partial V_1)$.
\end{enumerate}
Therefore, the $40$ vertices shown above are glued together, which becomes a vertex $u_1$ of $\Gamma$. Next, consider the vertex $ac\in V(\partial V_2)$, we have
\begin{enumerate}[1.]
    \item After identification (2), vertex $ac\in V(\partial V_2)$ is glued with $BA\in V(\partial V_5)$.
    \item After identification (17), vertex $BA\in V(\partial V_5)$ is glued with $ba\in V(\partial V_8)$.
    \item After identification (8), vertex $ab\in V(\partial V_8)$ is glued with $AC\in V(\partial V_3)$.
    \item After identification (26), vertex $AC\in V(\partial V_3)$ is glued with $cA\in V(\partial V_8)$.
    \item After identification (3), vertex $cA\in V(\partial V_8)$ is glued with $aB\in V(\partial V_3)$.
    \item After identification (14), vertex $Ba\in V(\partial V_3)$ is glued with $bc\in V(\partial V_5)$.
    \item After identification (24), vertex $cb\in V(\partial V_5)$ is glued with $CB\in V(\partial V_1)$.
    \item After identification (19), vertex $BC\in V(\partial V_1)$ is glued with $bA\in V(\partial V_{10})$.
    \item After identification (12), vertex $Ab\in V(\partial V_{10})$ is glued with $aC\in V(\partial V_{12})$.
    \item After identification (28), vertex $Ca\in V(\partial V_{12})$ is glued with $ca\in V(\partial V_{10})$.
    \item After identification (10), vertex $ac\in V(\partial V_{10})$ is glued with $BA\in V(\partial V_6)$.
    \item After identification (18), vertex $BA\in V(\partial V_6)$ is glued with $ba\in V(\partial V_9)$.
    \item After identification (9), vertex $ab\in V(\partial V_9)$ is glued with $AC\in V(\partial V_4)$.
    \item After identification (27), vertex $AC\in V(\partial V_4)$ is glued with $cA\in V(\partial V_9)$.
    \item After identification (4), vertex $cA\in V(\partial V_9)$ is glued with $aB\in V(\partial V_4)$.
    \item After identification (15), vertex $Ba\in V(\partial V_4)$ is glued with $bc\in V(\partial V_6)$.
    \item After identification (25), vertex $cb\in V(\partial V_6)$ is glued with $CB\in V(\partial V_7)$.
    \item After identification (21), vertex $BC\in V(\partial V_7)$ is glued with $bA\in V(\partial V_{12})$.
    \item After identification (11), vertex $Ab\in V(\partial V_{12})$ is glued with $aC\in V(\partial V_{11})$.
    \item After identification (23), vertex $Ca\in V(\partial V_{11})$ is glued with $ca\in V(\partial V_{2})$.
\end{enumerate}
Therefore, the $20$ vertices shown above are glued together, which becomes a vertex $u_2$ of $\Gamma$. Consequently, $\Gamma$ has $2$ vertices, $30$ edges, and $S\backslash \Gamma$ has $12$ connected components homeomorphic to $\mathbb{R}^2$, corresponding to the interiors of $V_i (i\in \{1, \ldots, 12\})$. In particular, $\chi(S) = 2-30+12=-16$. Note that the dual graph $\Gamma^*$ of $\Gamma$ also embeds in $S$, providing a $2$-dimensional cell complex structure on $S$. The $1$-skeleton of $S$ is $\Gamma^*$, and the $2$-cells of $S$ are the connected components of $S\backslash \Gamma^*$. By definition, $\Gamma^*$ has $12$ vertices, $30$ edges, and $S\backslash \Gamma^*$ has $2$ connected components $P_1$, $P_2$, which are homeomorphic to $\mathbb{R}^2$. The boundary of $P_1$ is a cycle, corresponding to the link of $u_1$ in $\Gamma$, simlilarly, the boundary of $P_2$ corresponds to the link of $u_2$ in $\Gamma$. Note that, there is an immersion $\phi:\Gamma^*\to Cay(F_3)/F_3$ induced by the orientation and labels of the edges of $\Gamma$, and we need to show that the maps 
\begin{align*}
    \partial P_1\to \Gamma^*\xrightarrow{\phi}Cay(F_3)/F_3\\
    \partial P_2\to \Gamma^*\xrightarrow{\phi}Cay(F_3)/F_3
\end{align*}
read two cyclic conjugates of powers of $w_0$. Concretely, for $k\in \{1, 2\}$, each directed edge $\overrightarrow{V_i V_j}$ of $\partial P_k$ corresponds to an edge $e$ of $\Gamma$, which is a part of the border between two disks $V_i$ and $V_j$, such that the label of $e$ in $\partial V_i$ is of the form $(x, \{u_1, u_2\})$, where $x\in \{A, B, C\}$ and $u_1, u_2\in \delta(x)$. In such case, the edge $\overrightarrow{V_i V_j}$ is labeled by $x^{-1}\in \{a, b, c\}$. We consecutively read the label of edges in $\partial P_1$ as follows.
\begin{itemize}
    \item The edge $e_1=\overrightarrow{V_2 V_1}$ corresponding to $(a, \{ab, aB\})\in E(\partial V_{1})\sim_0 (A, \{AC, Ac\})\in E(\partial V_{2})$ has label $a$.
    \item Consider the corner of the vertex $AC=(ab)^A$ in $\partial V_2$, we see that the successor of $e_1$ is $e_2 = \overrightarrow{V_2 V_1}$ corresponding to $(C, \{CA, CB\})\in E(\partial V_2)\sim_0 (c, \{cA, cb\})\in E(\partial V_1)$, which has label $c$.
    \item Consider the corner of the vertex $cA = (AC)^c$ in $\partial V_1$, we see that the successor of $e_2$ is $e_3 = \overrightarrow{V_1 V_7}$ corresponding to $(A, \{AC, Ac\})\in E(\partial V_1)\sim_0 (a, \{ab, aB\})\in E(\partial V_7)$, which has label $a$.
    \item Consider the corner of the vertex $aB = (cA)^a$ in $\partial V_7$, we see that the successor of $e_3$ is $e_4 = \overrightarrow{V_{12} V_7}$ corresponding to $(B, \{BC, Ba\})\in E(\partial V_7)\sim_0 (b, \{bA, bc\})\in E(\partial V_{12})$, which has label $b$.
    \item Consider the corner of the vertex $bc = (Ba)^b$ in $\partial V_{12}$, we see that the successor of $e_4$ is $e_5 = \overrightarrow{V_5 V_{12}}$ corresponding to $(c, \{ca, cb\})\in E(\partial V_{12})\sim_0 (C, \{Ca, CB\})\in E(\partial V_5)$, which has label $c$.
    \item Consider the corner of the vertex $CB = (bc)^C$ in $\partial V_5$, we see that the successor of $e_5$ is $e_6 = \overrightarrow{V_5 V_8}$ corresponding to $(B, \{BC, BA\})\in E(\partial V_5)\sim_0 (b, \{bA, ba\})\in E(\partial V_8)$, which has label $b$.
    \item Consider the corner of the vertex $bA = (CB)^b$ in $\partial V_8$, we see that the successor of $e_6$ is $e_7 = \overrightarrow{V_8 V_3}$ corresponding to $(A, \{Ab, Ac\})\in E(\partial V_8)\sim_0 (a, \{aC, aB\})\in E(\partial V_3)$, which has label $a$.
    \item Consider the corner of the vertex $aC = (Ab)^a$ in $\partial V_3$, we see that the successor of $e_7$ is $e_8 = \overrightarrow{V_3 V_8}$ corresponding to $(C, \{Ca, CA\})\in E(\partial V_3)\sim_0 (c, \{ca, cA\})\in E(\partial V_8)$, which has label $c$.
    \item Consider the corner of the vertex $ca = (aC)^c$ in $\partial V_8$, we see that the successor of $e_8$ is $e_9 = \overrightarrow{V_3 V_8}$ corresponding to $(a, \{ac, ab\})\in E(\partial V_8)\sim_0 (A, \{AB, AC\})\in E(\partial V_3)$, which has label $a$.
    \item Consider the corner of the vertex $AB = (ca)^A$ in $\partial V_3$, we see that the successor of $e_9$ is $e_{10} = \overrightarrow{V_3 V_5}$ corresponding to $(B, \{Ba, BA\})\in E(\partial V_3)\sim_0 (b, \{bc, ba\})\in E(\partial V_5)$, which has label $b$.
    \item Consider the corner of the vertex $ba = (AB)^b$ in $\partial V_5$, we see that the successor of $e_{10}$ is $e_{11} = \overrightarrow{V_7 V_5}$ corresponding to $(a, \{aC, ab\})\in E(\partial V_5)\sim_0 (A, \{Ab, AC\})\in E(\partial V_7)$, which has label $a$.
    \item Consider the corner of the vertex $AC = (ba)^A$ in $\partial V_7$, we see that the successor of $e_{11}$ is $e_{12} = \overrightarrow{V_7 V_6}$ corresponding to $(C, \{CA, CB\})\in E(\partial V_7)\sim_0 (c, \{cA, cb\})\in E(\partial V_6)$, which has label $c$.
    \item Consider the corner of the vertex $cA = (AC)^c$ in $\partial V_6$, we see that the successor of $e_{12}$ is $e_{13} = \overrightarrow{V_6 V_{10}}$ corresponding to $(A, \{AB, Ac\})\in E(\partial V_6)\sim_0 (a, \{ac, aB\})\in E(\partial V_{10})$, which has label $a$.
    \item Consider the corner of the vertex $aB = (cA)^a$ in $\partial V_{10}$, we see that the successor of $e_{13}$ is $e_{14} = \overrightarrow{V_{10} V_1}$ corresponding to $(B, \{Ba, BA\})\in E(\partial V_{10})\sim_0 (b, \{bc, ba\})\in E(\partial V_1)$, which has label $b$.
    \item Consider the corner of the vertex $bc = (aB)^b$ in $\partial V_1$, we see that the successor of $e_{14}$ is $e_{15} = \overrightarrow{V_2 V_1}$ corresponding to $(c, \{cA, cb\})\in E(\partial V_1)\sim_0 (C, \{CA, CB\})\in E(\partial V_2)$, which has label $c$.
\end{itemize}
Continuing by the same fashion, we arrive at the following sequence of consecutive edges in $\partial P_1$
\begin{align*}
    &e_{16}=(B, \{BC, Ba\})\in E(\partial V_2)\sim_0 (b, \{bA, bc\})\in E(\partial V_{11}), (BC)^b=bA,\\
    &e_{17}=(A, \{Ab, AC\})\in E(\partial V_{11})\sim_0 (a, \{aC, ab\})\in E(\partial V_{6}), (bA)^a = aC,\\ 
    &e_{18}=(C, \{Ca, CB\})\in E(\partial V_6)\sim_0 (c, \{ca, cb\})\in E(\partial V_{11}),(aC)^c = ca,\\ 
    &e_{19}=(a, \{aC, ac\})\in E(\partial V_{11})\sim_0 (A, \{Ab, AB\})\in E(\partial V_{12}),(ca)^A = AB,\\ 
    &e_{20}=(B, \{BC, BA\})\in E(\partial V_{12})\sim_0 (b, \{bA, ba\})\in E(\partial V_{7}),(AB)^b = ba,\\
    &e_{21}=(a, \{ab, aB\})\in E(\partial V_7)\sim_0 (A, \{AC, Ac\})\in E(\partial V_{1}), (ab)^A=AC,\\
    &e_{22}=(C, \{CA, CB\})\in E(\partial V_{1})\sim_0 (c, \{cA, cb\})\in E(\partial V_{5}), (AC)^c = cA,\\ 
    &e_{23}=(A, \{AB, Ac\})\in E(\partial V_5)\sim_0 (a, \{ac, aB\})\in E(\partial V_{2}),(Ac)^a = aB,\\ 
    &e_{24}=(B, \{BC, Ba\})\in E(\partial V_{2})\sim_0 (b, \{bA, bc\})\in E(\partial V_{11}),(aB)^b = bc,\\ 
    &e_{25}=(c, \{ca, cb\})\in E(\partial V_{11})\sim_0 (C, \{Ca, CB\})\in E(\partial V_{6}),(bc)^C = CB,\\
    &e_{26}=(B, \{BC, BA\})\in E(\partial V_6)\sim_0 (b, \{bA, ba\})\in E(\partial V_{9}), (BC)^b=bA,\\
    &e_{27}=(A, \{Ab, Ac\})\in E(\partial V_{9})\sim_0 (a, \{aC, aB\})\in E(\partial V_{4}), (bA)^a = aC,\\ 
    &e_{28}=(C, \{Ca, CA\})\in E(\partial V_4)\sim_0 (c, \{ca, cA\})\in E(\partial V_{9}),(aC)^c = ca,\\ 
    &e_{29}=(a, \{ac, ab\})\in E(\partial V_{9})\sim_0 (A, \{AB, AC\})\in E(\partial V_{4}),(ca)^A = AB,\\ 
    &e_{30}=(B, \{Ba, BA\})\in E(\partial V_{4})\sim_0 (b, \{bc, ba\})\in E(\partial V_{6}),(AB)^b = ba,\\
    &e_{31}=(a, \{aC, ab\})\in E(\partial V_6)\sim_0 (A, \{Ab, AC\})\in E(\partial V_{11}), (ba)^A=AC,\\
    &e_{32}=(C, \{Ca, CA\})\in E(\partial V_{11})\sim_0 (c, \{ca, cA\})\in E(\partial V_{2}), (AC)^c = cA,\\ 
    &e_{33}=(A, \{AC, Ac\})\in E(\partial V_2)\sim_0 (a, \{ab, aB\})\in E(\partial V_{1}),(Ac)^a = aB,\\ 
    &e_{34}=(B, \{BC, Ba\})\in E(\partial V_{1})\sim_0 (b, \{bA, bc\})\in E(\partial V_{10}),(aB)^b = bc,\\ 
    &e_{35}=(c, \{ca, cb\})\in E(\partial V_{10})\sim_0 (C, \{Ca, CB\})\in E(\partial V_{12}),(bc)^C = CB,\\
    &e_{36}=(B, \{BC, BA\})\in E(\partial V_{12})\sim_0 (b, \{bA, ba\})\in E(\partial V_{7}), (BC)^b=bA,\\
    &e_{37}=(A, \{Ab, AC\})\in E(\partial V_{7})\sim_0 (a, \{aC, ab\})\in E(\partial V_{5}), (bA)^a = aC,\\ 
    &e_{38}=(C, \{Ca, CB\})\in E(\partial V_5)\sim_0 (c, \{ca, cb\})\in E(\partial V_{12}),(aC)^c = ca,\\ 
    &e_{39}=(a, \{aC, ac\})\in E(\partial V_{12})\sim_0 (A, \{Ab, AB\})\in E(\partial V_{10}),(ca)^A = AB,\\ 
    &e_{40}=(B, \{Ba, BA\})\in E(\partial V_{10})\sim_0 (b, \{bc, ba\})\in E(\partial V_{1}),(AB)^b = ba,\\
    &e_{41}=(a, \{ab, aB\})\in E(\partial V_1)\sim_0 (A, \{AC, Ac\})\in E(\partial V_{2}) = e_1.
\end{align*}
We conclude that $\partial P_1 = e_1 e_2 \cdots e_{40}$ is a cycle of length $40$, and it reads $(AcabCbacAb)^4$, which is a cyclic conjugate of a power of $w_0$, as stated. Next, we read the label of edges in $\partial P_2$.
\begin{align*}
     &f_{1}=(a, \{ac, aB\})\in E(\partial V_2)\sim_0 (A, \{AB, Ac\})\in E(\partial V_{5}), (ca)^A=AB,\\
     &f_{2}=(B, \{BC, BA\})\in E(\partial V_5)\sim_0 (b, \{bA, ba\})\in E(\partial V_{8}), (AB)^b=ba,\\
     &f_{3}=(a, \{ac, ab\})\in E(\partial V_8)\sim_0 (A, \{AB, AC\})\in E(\partial V_{3}), (ab)^A=AC,\\
     &f_{4}=(C, \{Ca, CA\})\in E(\partial V_3)\sim_0 (c, \{ca, cA\})\in E(\partial V_{8}), (CA)^c=cA,\\
     &f_{5}=(A, \{Ab, Ac\})\in E(\partial V_8)\sim_0 (a, \{aC, aB\})\in E(\partial V_{3}), (Ac)^a=aB,\\
     &f_{6}=(B, \{Ba, BA\})\in E(\partial V_3)\sim_0 (b, \{bc, ba\})\in E(\partial V_{5}), (aB)^b=bc,\\
     &f_{7}=(c, \{cA, cb\})\in E(\partial V_5)\sim_0 (C, \{CA, CB\})\in E(\partial V_{1}), (cb)^C=CB,\\
     &f_{8}=(B, \{BC, Ba\})\in E(\partial V_1)\sim_0 (b, \{bA, bc\})\in E(\partial V_{10}), (BC)^b=bA,\\
     &f_{9}=(A, \{Ab, AB\})\in E(\partial V_{10})\sim_0 (a, \{aC, ac\})\in E(\partial V_{12}), (bA)^a=aC,\\
     &f_{10}=(C, \{Ca, CB\})\in E(\partial V_{12})\sim_0 (c, \{ca, cb\})\in E(\partial V_{10}), (Ca)^c=ca,\\
     &f_{11}=(a, \{ac, aB\})\in E(\partial V_{10})\sim_0 (A, \{AB, Ac\})\in E(\partial V_{6}), (ca)^A=AB,\\
     &f_{12}=(B, \{BC, BA\})\in E(\partial V_6)\sim_0 (b, \{bA, ba\})\in E(\partial V_{9}), (AB)^b=ba,\\
     &f_{13}=(a, \{ac, ab\})\in E(\partial V_9)\sim_0 (A, \{AB, AC\})\in E(\partial V_{4}), (ab)^A=AC,\\
     &f_{14}=(C, \{Ca, CA\})\in E(\partial V_4)\sim_0 (c, \{ca, cA\})\in E(\partial V_{9}), (CA)^c=cA,\\
     &f_{15}=(A, \{Ab, Ac\})\in E(\partial V_9)\sim_0 (a, \{aC, aB\})\in E(\partial V_{4}), (Ac)^a=aB,\\
     &f_{16}=(B, \{Ba, BA\})\in E(\partial V_{4})\sim_0 (b, \{bc, ba\})\in E(\partial V_{6}), (aB)^b=bc,\\
     &f_{17}=(c, \{cA, cb\})\in E(\partial V_6)\sim_0 (C, \{CA, CB\})\in E(\partial V_{7}), (cb)^C=CB,\\
     &f_{18}=(B, \{BC, Ba\})\in E(\partial V_7)\sim_0 (b, \{bA, bc\})\in E(\partial V_{12}), (BC)^b=bA,\\
     &f_{19}=(A, \{Ab, AB\})\in E(\partial V_{12})\sim_0 (a, \{aC, ac\})\in E(\partial V_{11}), (bA)^a=aC,\\
     &f_{20}=(C, \{Ca, CA\})\in E(\partial V_{11})\sim_0 (c, \{ca, cA\})\in E(\partial V_{2}), (Ca)^c=ca,\\
     &f_{21} = (a, \{ac, aB\})\in E(\partial V_2)\sim_0 (A, \{AB, Ac\})\in E(\partial V_{5}) = f_1.
\end{align*}
We conclude that $\partial P_2 = f_1 f_2 \cdots f_{20}$ is a cycle of length $20$, and it reads $(AbAcabCbac)^2$, which is a cyclic conjugate of a power of $w_0$, as stated.

Now, remove an open ball $B_1$ inside $P_1$ and an open ball $B_2$ inside $P_2$, because $\partial P_1$ and $\partial P_2$ read powers of cyclic conjugates of $w_0$, from \cite[Lemma 5, Theorem 6]{kim2010geometricity}, we conclude that the double of $S\backslash (B_1\cup B_2)$ admits a $\pi_1$-injective embedding into a finite covering space of $X(U)$. As a cell complex, the Euler characteristic of the double of $S\backslash (B_1\cup B_2)$ is $2(\chi(S)-2) = -36$, which implies that the fundamental group of the double of $S\backslash B_1$ is a hyperbolic surface subgroup of $D(U)$.

Concretely, we find a presentation for the fundamental group of the double of $S\backslash (B_1\cup B_2)$ as follows. The graph $\Gamma^*$ has $12$ vertices labeled $V_1, \ldots, V_{12}$, and has $30$ edges labeled $(1), \ldots, (30)$ as given above. In particular, $\Gamma^*$ is connected, and a spanning tree of $\Gamma^*$ is given by the edges 
\begin{equation*}
    (1), (2), (3), (4), (5), (6), (10), (11), (13), (14), (15).
\end{equation*}
Therefore, the fundamental group $\pi_1(\Gamma^*, V_1)$ is a free group of rank $19$, freely generated by the following cycles
\begin{align*}
    x_1 = &\textbf{(7)}(5)(2)(1)\\
    x_2 = &(1)(2)(14)(3)\textbf{(8)}(14)(2)(1)\\
    x_3 = &(13)(10)(15)\textbf{(9)}(4)(15)(10)(13)\\
    x_4 = &(13)\textbf{(12)}(11)(6)(10)(13)\\
    x_5 = &(1)(2)(5)\textbf{(16)}(11)(6)(10)(13)\\
    x_6 = &(1)(2)(14)(3)\textbf{(17)}(2)(1)\\
    x_7 = &(13)(10)\textbf{(18)}(4)(15)(10)(13)\\
    x_8 = &(13)\textbf{(19)}\\
    x_9 = &(1)\textbf{(20)}(6)(10)(13)\\
    x_{10} = &(1)(2)(5)\textbf{(21)}(11)(6)(10)(13)\\
    x_{11} = &(1)\textbf{(22)}\\
    x_{12} = &(1)\textbf{(23)}(6)(10)(13)\\
    x_{13} = &(1)(2)\textbf{(24)}\\
    x_{14} = &(13)(10)\textbf{(25)}(5)(2)(1)\\
    x_{15} = &(1)(2)(14)(3)\textbf{(26)}(14)(2)(1)\\
    x_{16} = &(13)(10)(15)\textbf{(27)}(4)(15)(10)(13)\\
    x_{17} = &(13)\textbf{(28)}(11)(6)(10)(13)\\
    x_{18} = &(13)(10)(6)\textbf{(29)}(10)(13)\\
    x_{19} = &(13)(10)(6)(11)\textbf{(30)}(2)(1).
\end{align*}
Consider the map $\phi: \Gamma^* \looparrowright Cay(F_3)/F_3$ such that the image of each directed edge of the form $(v^{-1}, \{e, f\})\sim_0 (v, \{e^v, f^v\})$ for $v\in \{A, B, C\}$ is the loop $v\in Cay(F_3)/F_3$. In this case, let $\phi_*: \pi_1(\Gamma^*, V_1)\to \pi_1(Cay(F_3)/F_3) = \langle a, b, c\rangle$, we have
\begin{align*}
    \phi(x_1) &=aaaa\\
    \phi(x_2)&=AABAAbaa\\
    \phi(x_3)&=BABAAbab\\
    \phi(x_4)&=Baaaab\\
    \phi(x_5)&=AAABaaab\\
    \phi(x_6)&=AABABaa\\
    \phi(x_7)&=BAbabab\\
    \phi(x_8)&=BB\\
    \phi(x_9)&=Abaab\\
    \phi(x_{10})&=AAAbaaab\\
    \phi(x_{11})&=Ac\\
    \phi(x_{12})&=ACaab\\
    \phi(x_{13})&=AAC\\
    \phi(x_{14})&=BACaaa\\
    \phi(x_{15})&=AABACbaa\\
    \phi(x_{16})&=BABcabab\\
    \phi(x_{17})&=BCaaab\\
    \phi(x_{18})&=BAACab\\
    \phi(x_{19})&=BAAACaa.
\end{align*}
By definition, the double of $S\backslash(B_1\cup B_2)$ has a presentation
\begin{equation*}
    G=\langle x_1, \ldots, x_{19}, y_1, \ldots, y_{19}, t_2\mid u_1=v_1, t_2 u_2 t_2^{-1}=v_2\rangle,
\end{equation*}
where $u_1$ is the word corresponding to the cycle $e_1 e_2\cdots e_{40} = \partial P_1$, and $v_1$ is obtained from $u_1$ by replacing $x_i$'s with $y_i$'s, similarly, $u_2$ is the word corresponding to the cycle $f_1 f_2\cdots f_{20} = \partial P_2$, and $v_2$ is obtained from $u_2$ by replacing $x_i$'s with $y_i$'s. From the description of the cycles, we have
\begin{align*}
    u_1&=x_{11}x_1 x_{10}x_{19}x_6^{-1}x_{15}^{-1}x_2 x_{14}x_{11}^{-1}x_9 x_{18}^{-1}x_5^{-1}x_1^{-1}x_{13}^{-1}x_9 x_{18}x_7 x_{16}x_3 x_{12}^{-1}x_8^{-1}x_{17}x_5^{-1}x_{19}^{-1}x_4^{-1},\\
    v_1 &= y_{11}y_1 y_{10}y_{19}y_6^{-1}y_{15}^{-1}y_2 y_{14}y_{11}^{-1}y_9 y_{18}^{-1}y_5^{-1}y_1^{-1}y_{13}^{-1}y_9 y_{18}y_7 y_{16}y_3 y_{12}^{-1}y_8^{-1}y_{17}y_5^{-1}y_{19}^{-1}y_4^{-1},\\
    u_2&=x_{6}^{-1}x_2 x_{15}^{-1}x_{13}x_8^{-1}x_4 x_{17}^{-1}x_7 x_3 x_{16} x_{14}x_{10}x_{12}^{-1},\\
    v_2&=y_{6}^{-1}y_2 y_{15}^{-1}y_{13}y_8^{-1}y_4 y_{17}^{-1}y_7 y_3 y_{16} y_{14}y_{10}y_{12}^{-1}.
\end{align*}
Let $\psi: G\to D(U)$ be the homomorphism of fundamental groups induced from an embedding of the double of $S\backslash (B_1\cup B_2)$ into a finite cover of $X(U)$, which comes from the labels of edges of $\Gamma^*$, then $\psi(x_i) = \phi(x_i)$ as given above, and $\psi(y_i)$ is obtained from $\psi(x_i)$ by replacing $(a, b, c)$ with $(d, e, f)$, respectively, and by the definition of $t_2$, we have
\begin{equation*}
    \psi(t_2) = A(AbAcabCbac)^2 a.
\end{equation*}
Therefore, the image of $G$ is the subgroup generated by $\{\psi(x_1), \ldots, \psi(x_{19}), \psi(y_1), \ldots, \psi(y_{19}), \phi(t_2)\}$, which is a surface subgroup of $D(U)$.

\bibliographystyle{plain} 
\bibliography{citations} 
\end{document}